\pgfplotsset{compat=1.18}
\setlist[enumerate,1]{label={(\arabic*)}}
\newcounter{algorithmicH}
\let\oldalgorithmic\algorithmic
\renewcommand{\algorithmic}{%
    \stepcounter{algorithmicH}
    \oldalgorithmic}
\renewcommand{\theHALG@line}{ALG@line.\thealgorithmicH.\arabic{ALG@line}}
\theoremstyle{plain}
\newtheorem{thm}{Theorem}
\newtheorem{lem}[thm]{Lemma}
\newtheorem{cor}[thm]{Corollary}
\newtheorem{prop}[thm]{Proposition}
\newtheorem{defn}[thm]{Definition}
\theoremstyle{definition}
\newtheorem{ex}[thm]{Example}
\theoremstyle{plain}
\crefname{thm}{theorem}{theorems}
\Crefname{thm}{Theorem}{Theorems}
\crefname{defn}{definition}{definitions}
\Crefname{defn}{Definition}{Definitions}
\crefname{prop}{proposition}{propositions}
\Crefname{prop}{Proposition}{Propositions}
\crefname{lem}{lemma}{lemmas}
\Crefname{lem}{Lemma}{Lemmas}
\crefname{cor}{corollary}{corollaries}
\Crefname{cor}{Corollary}{Corollaries}
\crefname{ex}{example}{examples}
\Crefname{ex}{Example}{Examples}
\crefname{rem}{remark}{remarks}
\Crefname{rem}{Remark}{Remarks}
\crefname{hypot}{hypothesis}{hypotheses}
\Crefname{hypot}{Hypothesis}{Hypotheses}
\crefname{conj}{conjecture}{conjectures}
\Crefname{conj}{Conjecture}{Conjectures}
\newcommand{\subalign}[1]{%
  \vcenter{%
    \Let@ \restore@math@cr \default@tag
    \baselineskip\fontdimen10 \scriptfont\tw@
    \advance\baselineskip\fontdimen12 \scriptfont\tw@
    \lineskip\thr@@\fontdimen8 \scriptfont\thr@@
    \lineskiplimit\lineskip
    \ialign{\hfil$\m@th\scriptstyle##$&$\m@th\scriptstyle{}##$\hfil\crcr
      #1\crcr
    }%
  }%
}
\newcolumntype{P}[1]{>{\centering\arraybackslash}p{#1}}
\newcolumntype{M}[1]{>{\centering\arraybackslash}m{#1}}
\newcommand{\F}{\mathbb{F}}                                         
\newcommand{\Fq}{\F_{q}}                                            
\newcommand{\Fqn}{\Fq^{n}}                                          
\newcommand{\Fqnxn}{\Fq^{n \times n}}                               
\newcommand{\Fqx}{\Fq^{\times}}                                     
\newcommand{\abs}[1]{\left\vert #1 \right\vert}                     
\DeclareMathOperator{\conv}{conv}                                   
\newcommand{\floor}[1]{\left\lfloor #1 \right\rfloor}               
\DeclareMathOperator{\Hom}{Hom}                                     
\DeclareMathOperator{\lcm}{lcm}                                     
\newcommand{\R}{\mathbb{R}}                                         
    \newcommand\widebar[1]{\mathop{\overline{#1}}}                  
\newcommand{\Z}{\mathbb{Z}}                                         
\newcommand{\Zn}{\Z^{n}}                                            
\newcommand{\AffSp}[2]{\mathbb{A}^{#1}_{#2}}                        
\newcommand{\ProjSp}[2]{\mathbb{P}^{#1}_{#2}}                       
\newcommand{\dehom}{\text{\normalfont deh}}                         
\newcommand{\homog}{\text{\normalfont hom}}                         
\newcommand{\topcomp}{\text{\normalfont top}}                       
\newcommand{\degree}[1]{\deg \left( #1 \right)}                    
\DeclareMathOperator{\LC}{LC}                                       
\DeclareMathOperator{\LM}{LM}                                       
\DeclareMathOperator{\Spec}{Spec}                                   
\newcommand{\Anemoi}{\texttt{Anemoi}\xspace}
\newcommand{\Chaghri}{\textsc{Chaghri}\xspace}
\newcommand{\Ciminion}{\texttt{Ciminion}\xspace}
\newcommand{\GMiMC}{\texttt{GMiMC}\xspace}
\newcommand{\GMiMCHash}{\texttt{GMiMCHash}\xspace}
\newcommand{\Griffin}{\textsc{Griffin}\xspace}
\newcommand{\Hades}{\textsc{Hades}\xspace}
\newcommand{\HERA}{\textsf{HERA}\xspace}
\newcommand{\Hydra}{\textsf{Hydra}\xspace}
\newcommand{\MiMC}{\texttt{MiMC}\xspace}
\newcommand{\Monolith}{\texttt{Monolith}\xspace}
\newcommand{\Poseidon}{\textsc{Poseidon}\xspace}
\newcommand{\SageMath}{\texttt{SageMath}\xspace}
\definecolor{maroon}{cmyk}{0, 0.87, 0.68, 0.32}
\definecolor{halfgray}{gray}{0.55}
\definecolor{ipython_frame}{RGB}{207, 207, 207}
\definecolor{ipython_bg}{RGB}{247, 247, 247}
\definecolor{ipython_red}{RGB}{186, 33, 33}
\definecolor{ipython_green}{RGB}{0, 128, 0}
\definecolor{ipython_cyan}{RGB}{64, 128, 128}
\definecolor{ipython_purple}{RGB}{170, 34, 255}
\newlength{\MaxSizeOfLineNumbers}%
\lstdefinelanguage{MyPython}{
    morekeywords={access,and,break,class,continue,def,del,elif,else,except,exec,finally,for,from,global,if,import,in,is,lambda,not,or,pass,print,raise,return,try,while},%
    %
    morekeywords=[2]{abs,all,any,basestring,bin,bool,bytearray,callable,chr,classmethod,cmp,compile,complex,delattr,dict,dir,divmod,enumerate,eval,execfile,file,filter,float,format,frozenset,getattr,globals,hasattr,hash,help,hex,id,input,int,isinstance,issubclass,iter,len,list,locals,long,map,max,memoryview,min,next,object,oct,open,ord,pow,property,range,raw_input,reduce,reload,repr,reversed,round,set,setattr,slice,sorted,staticmethod,str,sum,super,tuple,type,unichr,unicode,vars,xrange,zip,apply,buffer,coerce,intern},%
    sensitive=true,%
    morecomment=[l]\#,%
    morestring=[b]',%
    morestring=[b]",%
    morestring=[s]{'''}{'''},
    morestring=[s]{"""}{"""},
    morestring=[s]{r'}{'},
    morestring=[s]{r"}{"},%
    morestring=[s]{r'''}{'''},%
    morestring=[s]{r"""}{"""},%
    morestring=[s]{u'}{'},
    morestring=[s]{u"}{"},%
    morestring=[s]{u'''}{'''},%
    morestring=[s]{u"""}{"""},%
    %
    literate=
    {á}{{\'a}}1 {é}{{\'e}}1 {í}{{\'i}}1 {ó}{{\'o}}1 {ú}{{\'u}}1
    {Á}{{\'A}}1 {É}{{\'E}}1 {Í}{{\'I}}1 {Ó}{{\'O}}1 {Ú}{{\'U}}1
    {à}{{\`a}}1 {è}{{\`e}}1 {ì}{{\`i}}1 {ò}{{\`o}}1 {ù}{{\`u}}1
    {À}{{\`A}}1 {È}{{\'E}}1 {Ì}{{\`I}}1 {Ò}{{\`O}}1 {Ù}{{\`U}}1
    {ä}{{\"a}}1 {ë}{{\"e}}1 {ï}{{\"i}}1 {ö}{{\"o}}1 {ü}{{\"u}}1
    {Ä}{{\"A}}1 {Ë}{{\"E}}1 {Ï}{{\"I}}1 {Ö}{{\"O}}1 {Ü}{{\"U}}1
    {â}{{\^a}}1 {ê}{{\^e}}1 {î}{{\^i}}1 {ô}{{\^o}}1 {û}{{\^u}}1
    {Â}{{\^A}}1 {Ê}{{\^E}}1 {Î}{{\^I}}1 {Ô}{{\^O}}1 {Û}{{\^U}}1
    {œ}{{\oe}}1 {Œ}{{\OE}}1 {æ}{{\ae}}1 {Æ}{{\AE}}1 {ß}{{\ss}}1
    {ç}{{\c c}}1 {Ç}{{\c C}}1 {ø}{{\o}}1 {å}{{\r a}}1 {Å}{{\r A}}1
    {€}{{\EUR}}1 {£}{{\pounds}}1
    {^}{{{\color{ipython_purple}\^{}}}}1
    {=}{{{\color{ipython_purple}=}}}1
    {+}{{{\color{ipython_purple}+}}}1
    {*}{{{\color{ipython_purple}$^\ast$}}}1
    {/}{{{\color{ipython_purple}/}}}1
    {+=}{{{+=}}}1
    {-=}{{{-=}}}1
    {*=}{{{$^\ast$=}}}1
    {/=}{{{/=}}}1,
    literate=
    *{-}{{{\color{ipython_purple}-}}}1
    {?}{{{\color{ipython_purple}?}}}1,
    identifierstyle=\color{black}\ttfamily,
    commentstyle=\color{ipython_cyan}\ttfamily,
    stringstyle=\color{ipython_red}\ttfamily,
    keepspaces=true,
    showspaces=false,
    showstringspaces=false,
    %
    %
    %
    basicstyle=\scriptsize,
    keywordstyle=\color{ipython_green}\ttfamily,
}
\title{A Degree Bound for the c-Boomerang Uniformity}
\author{
    Matthias Johann Steiner \orcidlink{0000-0001-5206-6579} \\
    Alpen-Adria-Universit\"at Klagenfurt, \\
    Universit\"atsstraße 65-67, 9020 Klagenfurt am W\"orthersee, Austria \\
    \href{mailto:mattsteiner@edu.aau.at}{mattsteiner@edu.aau.at}}
\date{}
\begin{document}

    \maketitle

    \begin{abstract}
        Let $\mathbb{F}_q$ be a finite field, and let $F \in \Fq [X]$ be a polynomial with $d = \degree{F}$ such that $\gcd \left( d, q \right) = 1$.
        In this paper we prove that the $c$-Boomerang uniformity, $c \neq 0$, of $F$ is bounded by
        $
        \begin{rcases}
            \begin{cases}
                d^2, & c^2 \neq 1, \\
                d \cdot (d - 1), & c = - 1, \\
                d \cdot (d - 2), & c = 1
            \end{cases}
        \end{rcases}
        .$
        For all cases of $c$, we present tight examples for $F \in \Fq [X]$.

        Additionally, for the proof of $c = 1$ we establish that the bivariate polynomial $F (x) - F (y) + a \in k [x, y]$, where $k$ is a field of characteristic $p$ and $a \in k \setminus \{ 0 \}$, is absolutely irreducible if $p \nmid \deg \left( F \right)$.
    \end{abstract}

    \section{Introduction}
    Differential cryptanalysis introduced by Biham \& Shamir \cite{C:BihSha90} studies the propagation of input differences through a symmetric key cipher or a hash function.
    The susceptibility of a function $F: \Fq \to \Fq$ against differential cryptanalysis is expressed in the \emph{Differential Distribution Table} (DDT)
    \begin{equation}
        \delta_F (a, b) = \abs{\left\{ \mathbf{x} \in \Fqn \mid F (x + a) - F (x) = b \right\}},
    \end{equation}
    where $a, b \in \Fq$.
    The maximum entry with $a \neq 0$ of the DDT is also known as \emph{differential uniformity} \cite{EC:Nyberg93}.
    In a recent work Ellingsen et al.\ \cite{Ellingsen-cDifferential} generalized the DDT to $c$-scaled differences.

    A variant of differential cryptanalysis is the Boomerang attack introduced by Wagner \cite{FSE:Wagner99}.
    In the spirit of the DDT, Cid et al.\ \cite{EC:CHPSS18} introduced the \emph{Boomerang Connectivity Table} (BCT) to quantify the resistance of a permutation $F: \Fq \to \Fq$ against the Boomerang attack.
    Analogous to the $c$-scaled DDT, St\u{a}nic\u{a} \cite{Stanica-cBoomerang} introduced $c$-scaled differences to the BCT
    \begin{equation}\label{Equ: c-BCT definition}
        {}_{c} \mathcal{B}_F (a, b) = \abs{\left\{ (x, y) \in \Fq^2 \; \middle\vert \;
            \begin{aligned}
                F (x + y) &- c \cdot F (x) = b \\
                c \cdot F (x + y + a) &- F (x + a) = c \cdot b
            \end{aligned}
            \right\}},
    \end{equation}
    where $a, b \in \Fq$ and $c \in \Fqx$.
    The maximum entry of the $c$-BCT for $a, b \neq 0$ is also known as \emph{$c$-Boomerang uniformity}.

    Since its introduction, the $c$-BCT has been studied for many special permutation polynomials \cite{Stanica-cBoomerang,Stanica-SwappedInverse,Stanica-Characterization,Hasan-BinaryGold,Li-Boomerang-2024,Li-Boomerang-2025}.
    However, the mathematical investigations into functions with low ($c$-)Boomerang uniformity are in stark contrast with the actual requirements of many modern symmetric cryptography primitives.
    The past decade has seen a fast development of cryptographic primitives and protocols with privacy enhancing features, such as Fully-Homomorphic Encryption (FHE), Multi-Party Computation (MPC) and Zero-Knowledge (ZK) proofs.
    Performance of MPC, FHE \& ZK can often be improved via a dedicated symmetric cipher or hash function.
    We provide an (incomplete) overview of proposed symmetric primitives for these applications:
    \begin{itemize}
        \item Ciphers and Pseudo-Random Functions for MPC \cite{CCS:GRRSS16}: \MiMC \cite{AC:AGRRT16}, \GMiMC \cite{ESORICS:AGPRRR19}, \Hades \cite{EC:GLRRS20}, \Ciminion \cite{EC:DGGK21}, \Hydra \cite{EC:GOSW23}.

        \item Hash functions for ZK: \GMiMCHash \cite{ESORICS:AGPRRR19}, \Poseidon \cite{AFRICACRYPT:GraKhoSch23}, \Anemoi \cite{C:BBCPSV23}, \Griffin \cite{C:GHRSWW23}, \Monolith \cite{ToSC:GKLRSW24}.

        \item Hybrid FHE \cite{Naehring-Practical}: \Chaghri \cite{CCS:AshMahTop22}, \HERA \cite{AC:CHKLLL21}.
    \end{itemize}
    For efficiency in FHE/MPC/ZK applications these primitives must satisfy two novel design criteria: They must be native over a relatively large finite field $\Fq$, where $q \geq 2^{30}$ and often $q$ is prime, and their non-linear components must be representable with a low number of multiplications.
    The latter condition is often achieved via low-degree univariate permutations.

    Unfortunately, the aforementioned investigations into the ($c$-)Boomerang uniformity hardly yield any results applicable to cryptanalysis of FHE/MPC/ZK-friendly primitives.
    Henceforth, cryptodesigners would have to work out the ($c$-)BCT by hand for every novel S-Box.
    While this is in principle feasible, for example Wang et al.\ \cite{Wang-Boomerang} have worked out the $1$-Boomerang uniformity of permutation polynomials up to degree $6$, such an analysis quickly spans multiple pages for higher degrees.
    Recently, Steiner \cite{Steiner-Boomerang} addressed this gap in the literature for power permutations $x^d$ of $\Fq$ with $\gcd \left( d, q \right) = 1$.
    For $a, b \in \Fqx$ he proved that
    \begin{equation}\label{Equ: c-BCT monomials}
        {}_c \mathcal{B}_{x^d} (a, b) \leq
        \begin{cases}
            d^2, & c^2 \neq 1, \\
            d \cdot (d - 1), & c = - 1, \\
            d \cdot (d - 2), & c = 1.
        \end{cases}
    \end{equation}

    In this work we extend \Cref{Equ: c-BCT monomials} to permutation polynomials $F \in \Fq [x]$ under minimal structural assumptions on $F$.
    In particular, \Cref{Equ: c-BCT monomials} holds for any polynomial with $\gcd \big( \degree{F}, q \big) = 1$, see \Cref{Th: degree bound c = 1}, \Cref{Prop: c-boomerang unfiformity} and \Cref{Th: degree bound c = -1}.

    \subsection{Techniques to Prove the Main Result}
    On a high level Steiner's proof of \Cref{Equ: c-BCT monomials} proceeds in two steps.
    \begin{enumerate}
        \item Prove that one of the $c$-Boomerang polynomials in \Cref{Equ: c-BCT definition} is irreducible over the algebraic closure $\overline{\Fq}$ of $\Fq$, and verify that the other polynomial is not a multiple of the irreducible one.

        \item Homogenize the polynomials to interpret them as plane curves over $\ProjSp{2}{\overline{\Fq}}$, and estimate the intersection number with B\'ezout's theorem.
    \end{enumerate}
    For permutation monomials $x^d$ one $c$-Boomerang polynomial becomes $f_1 (x, z) = z^d - c \cdot x^d - b$, where $z = x + y$.
    Interpreting $f_1$ as univariate polynomial in $\Fq [x] \big[ z \big]$ one can prove its irreducibility via Eisenstein's criterion (if $\gcd \left( d, q \right) = 1$), see \cite[Lemma 3.3]{Steiner-Boomerang}.
    Unfortunately, this argument does not generalize to arbitrary permutation polynomials $F$ since it crucially relies on the fact that one can rewrite $f_1 (x, z) = z^d - (x - \alpha) \cdot g (x)$, where $\alpha \in \Fq$ is such that $c \cdot \alpha^d = -b$ and $g (\alpha) \neq 0$.

    We resolve this problem for the $1$-Boomerang uniformity via the Newton polytope method due to Gao \cite{Gao-Irreducibility}.
    Let $f = \sum_{c_i \neq 0} c_i \cdot x_1^{d_{i_1}} \cdots x_n^{d_{i_n}} \in k [x_1, \dots, x_n]$ be a polynomial, where $k$ is a field.
    For all non-zero coefficients $c_i$ we now collect the exponent vectors $\{ \mathbf{d}_i \}_i$ and interpret them as vectors in $\R^n$.
    The convex closure of $\{ \mathbf{d}_i \}_i$ is called the Newton polytope $\mathcal{N}_f$ of $f$.
    In particular, for $f, g \in K [x_1, \dots, x_n]$ one has $\mathcal{N}_{f \cdot g} = \mathcal{N}_f + \mathcal{N}_g$ \cite[\S 2]{Gao-Irreducibility}.
    Conversely, if the Newton polytope of $f$ cannot be rewritten as sum of polytopes (with positive integer vertices), then $f$ is absolutely irreducible over $k$.

    The Newton polytope of $F (x + y) - F (x) - b$, where $b \neq 0$, turns out to be a triangle, and if $\gcd \big( \degree{F}, q \big) = 1$ then the triangle is integrally indecomposable, see \Cref{Prop: irreducible polynomial}.
    This yields the generalization of \Cref{Equ: c-BCT monomials} for $c = 1$.

    Unfortunately, for $c \neq 1$ the polytope method is not applicable because the $c$-Boomerang polynomials have too much symmetry.
    We resolve this problem with Gr\"obner basis techniques.
    Let $f \in k [x_1, \dots, x_n]$ be a polynomial, then we can rewrite it as sum of homogeneous polynomials $f = f_d + f_{d - 1} + \ldots + f_0$.
    In particular, $f_d$ is called the homogeneous highest degree component of $f$.
    The highest degree components of the $c$-Boomerang polynomials admit relatively simple bivariate ideals.
    By computing the degree reverse lexicographic (DRL) Gr\"obner basis of the highest degree component ideal, we can already estimate the number of solutions for the $c$-Boomerang polynomials, which henceforth yields the other two cases of \Cref{Equ: c-BCT monomials}, see \Cref{Prop: c-boomerang unfiformity} and \Cref{Prop: highest degree components c = -1}.

    Moreover, with Gr\"obner bases of $c$-BCT polynomial systems we can construct tight examples for \Cref{Equ: c-BCT monomials}.

    \subsection{Structure of the Paper}
    In \Cref{Sec: Preliminaries} we introduce the mathematical preliminaries required of this work.
    In \Cref{Sec: Irreducibility of Boomerang Polynomials} we prove our $1$-Boomerang uniformity bounds via geometric methods, and \Cref{Sec: Arithmetic of Boomerang Polynomial Systems} we investigate the arithmetic of $c$-Boomerang polynomial systems to prove the other two cases.
    In \Cref{Sec: Constructing Tight Examples} we construct tight examples for our bounds.
    We finish this paper with a short discussion in \Cref{Sec: Discussion}.

    \section{Preliminaries}\label{Sec: Preliminaries}
    Let $k$ be a field, then we denote with $\bar{k}$ the algebraic closure of $k$.
    We denote the multiplicative group of a field with $k^\times = k \setminus \{ 0 \}$.
    For $q \in \Z$ a prime power we denote the finite field with $q$ many elements as $\Fq$.
    Elements of a vector space $\mathbf{a} \in k^n$ are denoted with small bold letters.

    Let $f \in k [x_1, \dots, x_n]$ be a non-zero polynomial, and let $x_0$ be a new variable.
    We denote the homogenization of $f$ with respect to $x_0$ as
    \begin{equation}
        f^\homog (x_0, \dots, x_n) = x_0^{\degree{f}} \cdot f \left( \frac{x_1}{x_0}, \dots, \frac{x_n}{x_0} \right) \in k [x_0, \dots, x_n].
    \end{equation}
    Conversely, let $F \in k [x_0, \dots, x_n]$ be homogeneous, then its dehomogenization with respect to $x_0$ is denoted as
    \begin{equation}
        F^\dehom (x_1, \dots, x_n) = F (1, x_1, \dots, x_n) \in k [x_1, \dots, x_n].
    \end{equation}

    For a commutative ring with unity $R$, an ideal $I \subset R$ is an additive subgroup which is multiplicatively closed.
    For $I \subset R$ an ideal, its radical is defined as $\sqrt{I} = \left\{ f \in R \mid \exists n \geq 1 \!: f^n \in I \right\}$.
    It is well-known that the radical is also an ideal.


    \subsection{\texorpdfstring{$c$}{c}-Boomerang Uniformity}
    First, we formally recall the $c$-Boomerang Connectivity Table and the $c$-Boomerang uniformity.
    \begin{defn}[{c-Boomerang Uniformity, \cite[Remark~1]{Stanica-cBoomerang}}]\label[defn]{Def: boomerang uniformity}
        Let $\Fq$ be a finite field, let $c \in \Fq^\times$, and let $F: \Fq \to \Fq$ be a function.
        \begin{enumerate}
            \item\label{Item: boomerang connectivity table} Let $a, b \in \Fq$.
            The entry of the $c$-Boomerang Connectivity Table ($c$-BCT) of $F$ at $(a, b)$ is defined as
            \begin{equation*}
                {}_{c} \mathcal{B}_F (a, b) = \abs{\left\{ (x, y) \in \Fq^2 \; \middle\vert \;
                    \begin{aligned}
                        F (x + y) &- c \cdot F (x) = b \\
                        c \cdot F (x + y + a) &- F (x + a) = c \cdot b
                    \end{aligned}
                    \right\}}.
            \end{equation*}

            \item The $c$-Boomerang uniformity of $F$ is defined as
            \begin{equation*}
                \beta_{F, c} = \max_{a, b \in \Fqx} {}_{c} \mathcal{B}_F (a, b).
            \end{equation*}
        \end{enumerate}
    \end{defn}

    Note that St\u{a}nic\u{a} originally defined the $c$-BCT only for permutations \cite[Definition~3]{Stanica-cBoomerang} as
    \begin{equation}\label{Equ: boomerang original definition}
        {}_{c} \mathcal{B}_F (a, b) = \abs{\left\{ x \in \Fq \; \middle\vert \; F^{-1} \left( c^{-1} \cdot F (x + a) + b \right) - F^{-1} \big( c \cdot F (x) + b \big) = a \right\}},
    \end{equation}
    which coincides for $c = 1$ with the original definition of the BCT \cite[Definition~3.1]{EC:CHPSS18}.
    By \cite[Theorem~4]{Stanica-cBoomerang} \Cref{Def: boomerang uniformity} \ref{Item: boomerang connectivity table} and \Cref{Equ: boomerang original definition} are equivalent for permutations, and in our context it is more convenient to work with \Cref{Def: boomerang uniformity} \ref{Item: boomerang connectivity table}.

    \subsection{Plane Curves}
    For a commutative ring $R$, the set of prime ideals $\Spec \left( R \right) = \left\{ \mathfrak{p} \subset R \mid \mathfrak{p} \text{ prime ideal} \right\}$ is called the spectrum of $R$.
    It is well-known that $\Spec \left( R \right)$ can be equipped with the Zariski topology, see for example \cite[\S 2.1]{Goertz-AlgGeom}.
    A locally ringed space $(X, \mathcal{O}_X)$ is a topological space $X$ together with a sheaf of commutative rings $\mathcal{O}_X$ on $X$.
    If a locally ringed space $(X, \mathcal{O}_X)$ is isomorphic to some $\big( \Spec \left( R \right), \mathcal{O}_{\Spec \left( R \right)} \big)$, where $R$ is a commutative ring, then $(X, \mathcal{O}_X)$ is called an affine scheme.
    A scheme $(X, \mathcal{O}_X)$ is a locally ringed space which can be covered by open affine schemes $X = \bigcup_i U_i$, i.e.\ $\big( U_i = \Spec \left( R_i \right), \mathcal{O}_X \vert_{U_i} \big)$.
    The dimension of a scheme $(X, \mathcal{O}_X)$ is defined to be the dimension of its  underlying topological space $X$.
    For a scheme $(X, \mathcal{O}_X)$, if the sheaf $\mathcal{O}_X$ is clear from context we just write $X$ for the scheme.
    For a general introduction into the scheme-theoretic formulation of algebraic geometry we refer the reader to \cite{Goertz-AlgGeom}.

    For a ring $R$ the $n$-dimensional affine space $\AffSp{n}{R}$ is defined to be the scheme $\Spec \big( \allowbreak R [x_1, \dots, x_n] \big)$, and for an ideal $I \subset R [x_1, \dots, x_n]$ the scheme $\mathcal{V} (I) = \Spec \big( R [x_1, \allowbreak \dots, x_n] / I \big)$ is called (affine) vanishing scheme of $I$.
    As scheme, the $n$-dimensional projective space $\ProjSp{n}{R}$ over $R$ is obtained by gluing $n + 1$ copies of the affine space $\AffSp{n}{R}$, see \cite[\S 3.6]{Goertz-AlgGeom}.
    Let $R [X_0, \dots, X_n]$ be graded via the degree, and let $I \subset R [X_0, \dots, X_n]$ be a homogeneous ideal.
    Analogously to the construction of $\ProjSp{n}{R}$, one can construct a scheme $\mathcal{V}_+ (I)$ together with a closed immersion $\iota: \mathcal{V}_+ (I) \to \ProjSp{n}{R}$.
    Therefore, $\mathcal{V}_+ (I)$ is called the projective vanishing scheme of $I$, see \cite[\S 3.7]{Goertz-AlgGeom}.

    Let $k$ be a field, let $k \subset K$ be a field extension, and let $X$ be a $k$-scheme.
    The $K$-valued points of $X$ are given by $X (K) = \Hom_k \big( \Spec (K), X \big)$.
    For affine/projective vanishing schemes the $K$-valued points have an elementary interpretation.
    \begin{ex}\label[ex]{Ex: k-valued points}
        Let $k$ be a field, let $k \subset K$ be a field extension, let $I \subset k [x_1, \dots, x_n]$ be an ideal, and let $J \subset k [X_0, \dots, X_n]$ be a homogeneous ideal.
        \begin{enumerate}
            \item $\AffSp{n}{k} (k) = k^n$ \cite[Example~4.2]{Goertz-AlgGeom}.
            I.e., the $k$-valued points correspond to the elementary definition of the affine space.

            \item $\mathcal{V} (I) \big( k \big) = \big\{ \mathbf{x} \in \AffSp{n}{k} (k) \; \big\vert \; \forall f \in I \!: f (\mathbf{x}) = 0 \big\}$ \cite[Example 4.3]{Goertz-AlgGeom}.
            I.e., the $k$-valued points correspond to the elementary definition of the affine variety.

            \item $\ProjSp{n}{k} (k)$ is given by points $\mathbf{x} \in \big( k^{n + 1} \setminus \{ \mathbf{0} \} \big) / \sim$ with the equivalence relation $\mathbf{x} \sim \mathbf{y} \Leftrightarrow \exists a \in k^\times \!: \mathbf{x} = a \cdot \mathbf{y}$ \cite[Exercise 4.6]{Goertz-AlgGeom}.
            I.e., the $k$-valued points correspond to the elementary definition of the projective space.

            \item $\mathcal{V}_+ (I) \big( K \big) = \big\{ \mathbf{x} \in \ProjSp{n}{K} (K) \; \big\vert \; \forall f \in J \!: f (\mathbf{x}) = 0 \big\}$ \cite[Example 5.3]{Goertz-AlgGeom}.
            I.e., the $K$-valued points correspond to the elementary definition of the projective variety.
        \end{enumerate}
    \end{ex}

    Let $k$ be a field, and let $F, G \in k [X_0, X_1, X_2]$.
    The vanishing scheme $\mathcal{V}_+ (F) \subset \ProjSp{n}{k}$ is also called \emph{plane curve}.
    It is well-known that the scheme-theoretic intersection of two plane curves \cite[Example~4.38]{Goertz-AlgGeom} is given by
    \begin{equation}
        \mathcal{V}_+ (F) \cap \mathcal{V}_+ (G) = \mathcal{V}_+ (F, G) \subset \ProjSp{2}{k}.
    \end{equation}

    Counting the number of points in intersections of vanishing schemes is a standard problem in algebraic geometry.
    Next, we recall the intersection number tailored for plane curves.
    \begin{defn}[{\cite[Definition~5.60]{Goertz-AlgGeom}}]
        Let $k$ be a field, and let $C, D \subset \ProjSp{2}{k}$ be two plane curves such that $Z := C \cap D$ is a $k$-scheme of dimension $0$.
        Then we call $i (C, D) := \dim_k \big( \Gamma (Z, \mathcal{O}_Z) \big)$ the intersection number of $C$ and $D$.
        For $z \in Z$ we call $i_z (C, D) := \dim_k (\mathcal{O}_{Z, z})$ the intersection number of $C$ and $D$ at $z$.
    \end{defn}

    From the definition it is easily verified that
    \begin{equation}
        i (C, D) = \sum_{z \in C \cap D} i_z (C, D).
    \end{equation}

    B\'ezout's famous theorem states that the intersection number of two plane curves together with their multiplicities is equal to the product of their degrees.
    \begin{thm}[{B\'ezout's Theorem, \cite[Theorem~5.61]{Goertz-AlgGeom}}]\label[thm]{Th: Bezout}
        Let $k$ be a field, and let $C = \mathcal{V}_+ (F)$ and $D = \mathcal{V}_+ (G)$ be plane curves in $\ProjSp{2}{k}$ given by polynomials without a common factor.
        Then
        \begin{equation*}
            i (C, D) = \degree{F} \cdot \degree{G}.
        \end{equation*}
        In particular, the intersection $C \cap D$ is non-empty and consists of a finite number of closed points.
    \end{thm}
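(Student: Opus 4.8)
The plan I would follow to prove the (classical) Bézout theorem reduces it to a Hilbert--polynomial computation for the homogeneous coordinate ring of the intersection, together with one comparison result from sheaf cohomology. Write $R = k[X_0, X_1, X_2]$, graded by total degree, put $d = \degree{F}$, $e = \degree{G}$ and $S = R/(F,G)$; the degenerate cases in which $F$ or $G$ is a unit are trivial, so assume $d, e \geq 1$. First I would use that $R$ is a unique factorisation domain, so ``$F$ and $G$ have no common factor'' says exactly that $F$ is a non-zero-divisor on $R$ and $G$ is a non-zero-divisor on $R/(F)$, i.e.\ that $(F, G)$ is a regular sequence. Hence the Koszul complex
\[
0 \longrightarrow R(-d-e) \longrightarrow R(-d) \oplus R(-e) \longrightarrow R \longrightarrow S \longrightarrow 0 ,
\]
with maps $1 \mapsto (-G, F)$ and $(a, b) \mapsto aF + bG$, is exact.

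Next I would read the Hilbert function of $S$ off this resolution. Using $\dim_k R(-a)_n = \binom{n - a + 2}{2}$ for $n \gg 0$, the alternating sum over the complex gives
\[
\dim_k S_n = \binom{n+2}{2} - \binom{n-d+2}{2} - \binom{n-e+2}{2} + \binom{n-d-e+2}{2} = d e ,
\]
since all terms of degree $\geq 1$ in $n$ cancel. So the Hilbert polynomial of $S$ is the constant $de$; in particular $S$ has Krull dimension $1$, hence the projective scheme $Z := \mathcal{V}_+(F, G) = C \cap D$ has dimension $0$.

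The third and main step is to identify this constant with $i(C, D) = \dim_k \Gamma(Z, \mathcal{O}_Z)$. Since $Z$ is a $0$-dimensional scheme of finite type over $k$, it is $\Spec$ of a finite-dimensional $k$-algebra $A$ that splits as a finite product of local Artinian rings; as the Picard group of a local ring is trivial, every twist $\mathcal{O}_Z(n)$ is isomorphic to $\mathcal{O}_Z$, so $\dim_k H^0(Z, \mathcal{O}_Z(n)) = \dim_k A = i(C,D)$ for all $n$. On the other hand, Serre's theorems on coherent sheaves on projective space give $H^0(Z, \mathcal{O}_Z(n)) = S_n$ for $n \gg 0$ (the higher cohomology, and the possible failure of $(F,G)$ to be saturated, affect only finitely many degrees). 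Comparing the two expressions for $n \gg 0$ yields $i(C,D) = \dim_k S_n = d e$. Non-emptiness is then immediate --- if $Z$ were empty then $\Gamma(Z, \mathcal{O}_Z) = 0 \neq de$ --- and a $0$-dimensional Noetherian scheme has only finitely many, necessarily closed, points.

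I expect the real content to sit in the third step: everything preceding it is the elementary commutative algebra of a regular sequence plus bookkeeping with binomial coefficients, whereas matching the purely graded-algebraic quantity $\dim_k S_n$ with the sheaf-theoretic intersection number $\dim_k \Gamma(Z, \mathcal{O}_Z)$ is precisely where Serre's finiteness and vanishing results enter. If one prefers to stay hands-on, the same step can instead be done by a linear change of coordinates (after extending $k$ if necessary, using that $i(C,D)$ is base-change invariant) that makes the leading forms of $F$ and $G$ coprime and pushes $C \cap D$ into $\AffSp{2}{k}$, followed by an associated-graded argument showing $\dim_k k[x_1,x_2]/(F^\dehom, G^\dehom) = de$; the obstacle is the same, only relocated.
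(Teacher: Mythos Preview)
The paper does not prove this statement at all: B\'ezout's theorem is quoted as a background result with a citation to \cite[Theorem~5.61]{Goertz-AlgGeom}, and no proof is given in the paper itself. So there is nothing to compare your proposal against.

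That said, your outline is a correct and standard route to B\'ezout in the plane: regularity of the sequence $(F,G)$ in the UFD $k[X_0,X_1,X_2]$, the Koszul resolution, the Hilbert-polynomial computation giving the constant $de$, and then the identification of that constant with $\dim_k \Gamma(Z,\mathcal{O}_Z)$ via Serre's theorems (or, alternatively, a coordinate change pushing $Z$ into an affine chart). Your assessment of where the content lies is accurate: the first two steps are bookkeeping, and the comparison $\dim_k S_n = \dim_k H^0(Z,\mathcal{O}_Z(n))$ for $n \gg 0$ is where the real input from sheaf cohomology enters. This is essentially the argument in G\"ortz--Wedhorn, so had the paper reproduced that proof you would be in agreement with it.
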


    \subsection{Polytope Method}
    It is well-known that polynomial rings over fields $P = k [x_1, \dots, x_n]$ are unique factorization domains.
    A polynomial $f \in P$ is called \emph{irreducible} if for every factorization $f = g \cdot h$, with $g, h \in P$, one hast that either $g \in k^\times$ or $h \in k^\times$.
    Additionally, an irreducible polynomial $f \in P$ is called \emph{absolutely irreducible} if it remains irreducible over every algebraic field extension $k \subset K$.

    A standard algorithmic problem in commutative algebra is the decomposition of $f$ into its irreducible factors with multiplicities.
    However, it is often useful to decide whether $f$ is irreducible or not without explicit factorization of $f$.
    For the latter purpose, mathematicians developed various irreducibility criteria.
    In this paper, we work with Gao's polytope method \cite{Gao-Irreducibility}.

    Let $S \subset \R^n$ be a subset, then $S$ is called \emph{convex} if for any two points $\mathbf{x}, \mathbf{y} \in S$ the line segment for $\mathbf{x}$ to $\mathbf{y}$ is also contained in $S$, i.e.\
    \begin{equation}
        \forall 0 \leq t \leq 1 \!: \mathbf{x} + t \cdot (\mathbf{y} - \mathbf{x}) = (1 - t) \cdot \mathbf{x} + t \cdot \mathbf{y} \in S.
    \end{equation}
    The smallest convex set of $\R^n$ which contains $S$ is called the \emph{convex closure} $\conv \left( S \right)$ of $S$.
    The following identity is well-known
    \begin{equation}\label{Equ: convex closure}
        \conv \left( S \right) = \left\{ \sum_{i = 1}^{n} t_i \cdot \mathbf{a}_i \; \middle\vert \; \mathbf{a}_i \in S,\ t_i \geq 0,\ \sum_{i = 1}^{n} t_i = 1 \right\}.
    \end{equation}
    For a finite set $S = \{ \mathbf{a}_1, \dots, \mathbf{a}_n \}$ we also denote the convex closure as $\braket{\mathbf{a}_1, \dots, \mathbf{a}_n}$.
    The convex closure of a finite set $S$ is also called \emph{polytope}, and a point of a polytope is called \emph{vertex} if it is not on the line segment of any other two points of the polytope.
    With \Cref{Equ: convex closure} it is easy to verify that a polytope is equal to the convex closure of its vertices.

    Returning to polynomials, any $f \in k [x_1, \dots, x_n]$ can be expressed as finite sum $f = \sum_{i} c_i \cdot x_1^{d_{i_1}} \cdots x_n^{d_{i_n}}$, where $c_i \in k^\times$.
    Naturally, we can consider the exponent vectors $\mathbf{d}_i = \left( d_{i_1}, \dots, d_{i_n} \right)^\intercal \subset \Z_{\geq 0}^n$ as elements of $\R^n$.
    The \emph{Newton polytope} $\mathcal{N}_f$ of $f$ is then defined to be the convex closure $\conv \big( \{ \mathbf{d}_i \}_i \big)$ of $f$'s exponent vectors.

    For two sets $A, B \subset \R^n$, their \emph{Minkowski sum} is defined as $A + B = \{ a + b \mid a \in A,\ \allowbreak b \in B \}$.
    Let $f, g, h \in k [x_1, \dots, x_n]$ be such that $f = g \cdot h$, fundamental to Gao's irreducibility criterion is the identity \cite[Lemma~2.1]{Gao-Irreducibility}
    \begin{equation}
        \mathcal{N}_f = \mathcal{N}_g + \mathcal{N}_h.
    \end{equation}

    A point $\mathbf{x} \in \R^n$ is called \emph{integral} if $\mathbf{x} \in \Zn$, and a polytope is called integral if all its vertices are integral.
    An integral polytope $C$ is called \emph{integrally decomposable} if it can be written as Minkowski sum $C = A + B$ with $A$ and $B$ integral polytopes too.
    Otherwise, $C$ is called \emph{integrally indecomposable}.
    Finally, we can express Gao's irreducibility criterion.
    \begin{thm}[{Gao's Irreducibility Criterion, \cite[\S 2]{Gao-Irreducibility}}]\label[thm]{Th: absolutely irreducible}
        Let $k$ be a field, and let $f \in k [x_1, \dots, x_n]$ be a non-zero polynomial not divisible by any $x_i$.
        If the Newton polytope of $f$ is integrally indecomposable, then $f$ is absolutely irreducible.
    \end{thm}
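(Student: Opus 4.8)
The plan is to prove the contrapositive: assuming $f$ factors non-trivially over some algebraic extension, I exhibit a non-trivial integral Minkowski decomposition of $\mathcal{N}_f$. The only substantial ingredient is the product identity $\mathcal{N}_{g h} = \mathcal{N}_g + \mathcal{N}_h$ recorded above (\cite[Lemma~2.1]{Gao-Irreducibility}), which is valid over an arbitrary field. So suppose $f$ is not absolutely irreducible; we may assume $f$ is non-constant (for constant $f$ the Newton polytope is a single point and the statement is degenerate). Then either $f$ is already reducible over $k$, or it becomes reducible over some algebraic extension $k \subset K$; in either case there is a field $K \supseteq k$ and a factorization $f = g \cdot h$ in $K[x_1, \dots, x_n]$ with $g$ and $h$ both non-units, hence both non-constant. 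Applying the product identity over $K$ yields $\mathcal{N}_f = \mathcal{N}_g + \mathcal{N}_h$.

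Next I would verify that this decomposition is genuinely non-trivial and integral. The exponent vectors of $g$ and of $h$ lie in $\Z_{\geq 0}^n$, so $\mathcal{N}_g$ and $\mathcal{N}_h$ are integral polytopes contained in $\R_{\geq 0}^n$. For each index $i$, the hypothesis $x_i \nmid f$ means that some exponent vector of $f$ has $i$-th coordinate $0$, i.e.\ $\min \{ v_i : v \in \mathcal{N}_f \} = 0$; since the minimum of a coordinate over a Minkowski sum is the sum of the two minima, and each summand is $\geq 0$ because $\mathcal{N}_g, \mathcal{N}_h \subseteq \R_{\geq 0}^n$, both minima vanish, so $x_i$ divides neither $g$ nor $h$. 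Consequently, if $\mathcal{N}_g$ were a single point $\mathbf{d}$, then $g$ would equal $c \cdot x_1^{d_1} \cdots x_n^{d_n}$ with $c \in K^\times$, and $x_i \nmid g$ for every $i$ would force $\mathbf{d} = \mathbf{0}$, contradicting that $g$ is non-constant; the same applies to $h$. Hence $\mathcal{N}_g$ and $\mathcal{N}_h$ each have at least two distinct vertices, and $\mathcal{N}_f = \mathcal{N}_g + \mathcal{N}_h$ is an integral decomposition into polytopes neither of which is a point, i.e.\ $\mathcal{N}_f$ is integrally decomposable. This is the desired contradiction, so $f$ is absolutely irreducible.

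The argument is essentially bookkeeping once the product identity is granted; the one place that needs care is ensuring the hypothetical factors $g, h$ have Newton polytopes that are not single points, which is exactly where the assumption ``$f$ not divisible by any $x_i$'' enters and what rules out the trivial decompositions $\mathcal{N}_f = \mathcal{N}_f + \{ \mathbf{0} \}$. If one instead wanted a self-contained account, the real work would be in proving the product identity itself: the inclusion $\mathcal{N}_{gh} \subseteq \mathcal{N}_g + \mathcal{N}_h$ is immediate from expanding $g \cdot h$, while the reverse inclusion follows by checking that for every linear functional the supporting vertex of $\mathcal{N}_{gh}$ is the sum of the supporting vertices of $\mathcal{N}_g$ and $\mathcal{N}_h$, using that the associated leading coefficient is a product of non-zero field elements and hence non-zero. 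Since that identity is quoted in the preliminaries, I take it as given.
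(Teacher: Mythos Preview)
The paper does not supply its own proof of this theorem: it is quoted from Gao's paper \cite[\S 2]{Gao-Irreducibility} as a preliminary tool, with no argument given beyond the preceding product identity $\mathcal{N}_{gh} = \mathcal{N}_g + \mathcal{N}_h$. Your proposal is precisely the standard contrapositive argument (and is essentially Gao's own), and it is correct. In particular, your use of the hypothesis ``$x_i \nmid f$'' to force both hypothetical Newton polytopes $\mathcal{N}_g$, $\mathcal{N}_h$ to have more than one point is the right way to rule out the trivial decomposition; note that the paper's phrasing of ``integrally decomposable'' omits this non-triviality condition, but Gao's definition (and your argument) implicitly require each summand to have at least two points.
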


    In this paper we will encounter triangles, i.e.\ polytopes generated by three distinct points $\braket{\mathbf{x}, \mathbf{y}, \mathbf{z}} \subset \R^n$.
    For integral triangles, Gao has fully characterized being integrally indecomposable.
    Let $\mathbf{a} = \left( a_1, \dots, a_n \right)^\intercal, \mathbf{b} \in \Zn$, for ease of writing we abbreviate $\gcd \left( \mathbf{a} \right) = \gcd \left( a_1, \dots, a_n \right)$, and analogously for $\gcd \left( \mathbf{a}, \mathbf{b} \right)$.
    \begin{cor}[{\cite[Corollary~4.5]{Gao-Irreducibility}}]\label[cor]{Cor: triangle indecomposable}
        Let $\mathbf{v}_0, \mathbf{v}_1, \mathbf{v}_2 \in \Zn$ be three points not all on one line.
        Then the triangle $\braket{\mathbf{v}_0, \mathbf{v}_1, \mathbf{v}_2}$ is integrally indecomposable if and only if
        \begin{equation*}
            \gcd \left( \mathbf{v}_0 - \mathbf{v}_1, \mathbf{v}_0 - \mathbf{v}_2 \right) = 1.
        \end{equation*}
    \end{cor}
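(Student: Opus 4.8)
The plan is to translate so that $\mathbf{v}_0 = \mathbf{0}$ and then analyse how a hypothetical Minkowski decomposition must interact with the normal fan of the triangle. Since translating by an integral vector preserves the property of being an integral polytope, commutes with Minkowski sums, and fixes the differences $\mathbf{v}_0 - \mathbf{v}_1$ and $\mathbf{v}_0 - \mathbf{v}_2$, integral (in)decomposability is a translation invariant and we may assume $\mathbf{v}_0 = \mathbf{0}$. Put $\mathbf{a} = \mathbf{v}_1$ and $\mathbf{b} = \mathbf{v}_2$; these lie in $\Zn$, are $\R$-linearly independent because the three points are not collinear, and satisfy $\gcd(\mathbf{v}_0 - \mathbf{v}_1, \mathbf{v}_0 - \mathbf{v}_2) = \gcd(\mathbf{a}, \mathbf{b})$. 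Hence it suffices to show that $T := \braket{\mathbf{0}, \mathbf{a}, \mathbf{b}}$ is integrally indecomposable if and only if $\gcd(\mathbf{a}, \mathbf{b}) = 1$.

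For the forward implication, argued contrapositively, suppose $d := \gcd(\mathbf{a}, \mathbf{b}) \geq 2$. Then $\mathbf{a} = d \mathbf{a}'$ and $\mathbf{b} = d \mathbf{b}'$ with $\mathbf{a}', \mathbf{b}' \in \Zn$, and using the identity $\lambda C + \mu C = (\lambda + \mu) C$ valid for any convex set $C$ and scalars $\lambda, \mu \geq 0$, we get $T = d \cdot \braket{\mathbf{0}, \mathbf{a}', \mathbf{b}'} = \braket{\mathbf{0}, \mathbf{a}', \mathbf{b}'} + \dots + \braket{\mathbf{0}, \mathbf{a}', \mathbf{b}'}$ ($d$ non-trivial integral summands), so $T$ is integrally decomposable.

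For the reverse implication, suppose $T = A + B$ with $A$ and $B$ integral polytopes; the goal is to show that one of $A$, $B$ is a single point. We use the face identity $(A + B)^{\mathbf{n}} = A^{\mathbf{n}} + B^{\mathbf{n}}$, where $X^{\mathbf{n}}$ denotes the face of $X$ on which a linear functional $\mathbf{n}$ is maximal. Applying it at a direction whose $T$-face is the vertex $\mathbf{0}$ exhibits $\mathbf{0}$ as the sum of a vertex of $A$ and a vertex of $B$, and after an integral translation of $A$ and $B$ we may assume both of these vertices are $\mathbf{0}$; then $A, B \subseteq V := \Span_\R(\mathbf{a}, \mathbf{b})$, and the vertices $\mathbf{a}$ and $\mathbf{b}$ of $T$ split as $\mathbf{a} = \mathbf{p}_a + \mathbf{q}_a$ and $\mathbf{b} = \mathbf{p}_b + \mathbf{q}_b$ with $\mathbf{p}_a, \mathbf{p}_b$ integral vertices of $A$ and $\mathbf{q}_a, \mathbf{q}_b$ integral vertices of $B$. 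Evaluating the face identity on the normal rays of the three edges $[\mathbf{0}, \mathbf{a}]$, $[\mathbf{0}, \mathbf{b}]$ and $[\mathbf{a}, \mathbf{b}]$ of $T$, together with the observation that a Minkowski sum of two line segments is again a segment only when the segments are parallel with the same orientation, forces $\mathbf{p}_a = \alpha \mathbf{u}_1$, $\mathbf{p}_b = \beta \mathbf{u}_3$ and $\mathbf{p}_b - \mathbf{p}_a = \gamma \mathbf{u}_2$, where $\mathbf{u}_1, \mathbf{u}_2, \mathbf{u}_3$ are the primitive integer vectors in the directions of $\mathbf{a}$, $\mathbf{b} - \mathbf{a}$ and $\mathbf{b}$, and $\alpha, \beta, \gamma$ are integers with $0 \leq \alpha \leq \gcd(\mathbf{a})$, $0 \leq \gamma \leq \gcd(\mathbf{b} - \mathbf{a})$ and $0 \leq \beta \leq \gcd(\mathbf{b})$. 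Substituting these expressions and comparing coefficients in the basis $\{\mathbf{a}, \mathbf{b}\}$ of $V$ yields $\alpha / \gcd(\mathbf{a}) = \beta / \gcd(\mathbf{b}) = \gamma / \gcd(\mathbf{b} - \mathbf{a}) =: t \in [0, 1]$. Writing $t = p / q$ in lowest terms, integrality of $\alpha$ and $\beta$ forces $q \mid \gcd(\mathbf{a})$ and $q \mid \gcd(\mathbf{b})$, hence $q \mid \gcd(\mathbf{a}, \mathbf{b}) = 1$, so $q = 1$ and $t \in \{0, 1\}$. If $t = 0$ then $\mathbf{p}_a = \mathbf{p}_b = \mathbf{0}$, so $A^{\mathbf{n}} = \mathbf{0}$ for $\mathbf{n}$ ranging over the three normal cones of $T$, which cover $V^\ast$; hence $A = \{\mathbf{0}\}$. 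Symmetrically, $t = 1$ gives $B = \{\mathbf{0}\}$. In either case $T$ has no non-trivial Minkowski decomposition, so it is integrally indecomposable.

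I expect the main obstacle to be the geometric bookkeeping of the last paragraph: justifying the splitting of vertices under Minkowski sums, proving the segment-sum claim, and ruling out the degenerate scenarios in which $A$ or $B$ might a priori fail to be full-dimensional inside $V$. A cleaner alternative is to encode a lattice polygon by its cyclic list of edge vectors (each a primitive direction scaled by an integral edge length) and invoke the rule that Minkowski sums merge edge vectors of equal direction by adding their lengths; the triangle $T$ has exactly three edge vectors spanning three distinct rays, so the same $\gcd$ computation forces any integral summand of $T$ to be a point or a translate of $T$. Along the way one also uses the elementary identity $\gcd(\gcd(\mathbf{a}), \gcd(\mathbf{b})) = \gcd(\mathbf{a}, \mathbf{b})$: any common divisor of all coordinates of $\mathbf{a}$ and of $\mathbf{b}$ divides $\gcd(\mathbf{a})$ and $\gcd(\mathbf{b})$, while any common divisor of $\gcd(\mathbf{a})$ and $\gcd(\mathbf{b})$ divides every coordinate of $\mathbf{a}$ and of $\mathbf{b}$.
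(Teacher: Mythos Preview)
The paper does not prove this corollary at all: it is quoted verbatim from Gao's paper \cite[Corollary~4.5]{Gao-Irreducibility} and used as a black box, so there is no in-paper argument to compare against. Your proposal is therefore not a reconstruction of something the authors did but an independent proof of the cited result.

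That said, your argument is essentially the standard one and is correct in outline. The reduction to $\mathbf{v}_0 = \mathbf{0}$ and the easy direction via $T = T' + (d-1)T'$ are fine. For the hard direction, the crucial structural fact you are implicitly leaning on is that the normal fan of a Minkowski sum is the common refinement of the summands' normal fans; hence any integral summand $A$ of the triangle $T$ is itself a (possibly degenerate) triangle with edges parallel to those of $T$. Once that is in hand, your parametrisation $\mathbf{p}_a = \alpha\mathbf{u}_1$, $\mathbf{p}_b = \beta\mathbf{u}_3$, $\mathbf{p}_b - \mathbf{p}_a = \gamma\mathbf{u}_2$ and the comparison of coefficients in the basis $\{\mathbf{a},\mathbf{b}\}$ go through, and the divisibility argument forcing $t \in \{0,1\}$ is clean. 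The one step you flag as ``geometric bookkeeping'' --- deducing $A = \{\mathbf{0}\}$ from $\mathbf{p}_a = \mathbf{p}_b = \mathbf{0}$ --- is immediate once you invoke the normal-fan refinement: $A$ has at most three vertices and you have just shown they all coincide. Your suggested ``cleaner alternative'' via cyclic edge-vector lists is exactly how Gao phrases it and would let you skip the face-by-face analysis entirely.
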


    \subsection{Triangles}\label{Sec: Triangles}
    For our study of triangle Newton polytopes we also need some basic properties of triangles, which can be proven with elementary Euclidean geometry.
    For an introduction to Euclidean geometry we refer the reader to \cite{Hartshorn-Euclid}.

    By mirroring, any triangle $\Delta = \mathbf{A}\mathbf{B}\mathbf{C} \subset \R^2$ can be extended to a parallelogram $P = \mathbf{A}\mathbf{B}\mathbf{D}\mathbf{C} \subset \R^2$, see \Cref{Fig: mirrored triangle}.
    Let $\mathbf{a} = \mathbf{B} - \mathbf{A} =
    \begin{pmatrix}
        a_x \\ a_y
    \end{pmatrix}
    $ and $\mathbf{b} = \mathbf{C} - \mathbf{A} =
    \begin{pmatrix}
        b_x \\ b_y
    \end{pmatrix}
    $ denote the edge vectors spanning the parallelogram, then the area of $P$ is given by the determinant
    \begin{equation}
        \mathcal{A}_P = \abs{\det
        \begin{pmatrix}
            a_x & b_x \\
            a_y & b_y
        \end{pmatrix}
        }
        \qquad \Longrightarrow \qquad \mathcal{A}_\Delta = \frac{1}{2} \cdot
        \abs{ \det
        \begin{pmatrix}
            a_x & b_x \\
            a_y & b_y
        \end{pmatrix}
        }
        .
    \end{equation}

    \clearpage 
    \begin{figure}[H]
    	\centering
    	\begin{tikzpicture}
    		\coordinate[label=below:$\mathbf{A}$] (A) at (0, 0);
    		\coordinate[label=below:$\mathbf{B}$] (B) at (3, 1);
    		\coordinate[label=above:$\mathbf{C}$] (C) at (2, 4);
    		\coordinate[label=above:$\mathbf{D}$] (D) at (5, 5);

    		\draw[thick] (A) -- node[below] {$\mathbf{a}$} (B) -- (C) -- cycle;
    		\draw[thick, loosely dashed] (B) -- (C) -- (D) -- node[right] {$\mathbf{b}$} cycle;
    	\end{tikzpicture}
    	\caption{Mirrored triangle.}
        \label{Fig: mirrored triangle}
    \end{figure}
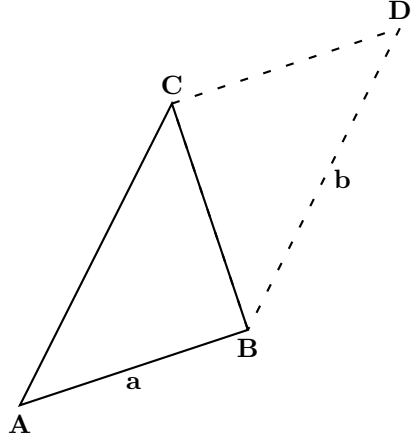

    Let $\Delta = \mathbf{A}\mathbf{B}\mathbf{C} \subset \R^2$ be a triangle, and let $\mathbf{P} \in \R^2$ be a point not lying on one of the edges $\mathbf{B} \mathbf{A}$, $\mathbf{C} \mathbf{B}$ or $\mathbf{A} \mathbf{C}$.
    To decide whether $\mathbf{P}$ lies within the triangle $\Delta$, we consider the three triangles $\Delta_1 = \mathbf{A}\mathbf{B}\mathbf{P}$, $\Delta_2 = \mathbf{A}\mathbf{P}\mathbf{C}$ and $\Delta_3 = \mathbf{P}\mathbf{B}\mathbf{C}$.
    If the area identity $\mathcal{A}_{\Delta} = \mathcal{A}_{\Delta_1} + \mathcal{A}_{\Delta_2} + \mathcal{A}_{\Delta_3}$ is satisfied, then $\mathbf{P}$ indeed lies within $\Delta$, otherwise the identity is not satisfied.
    This property is illustrated in \Cref{Fig: triangles}.
    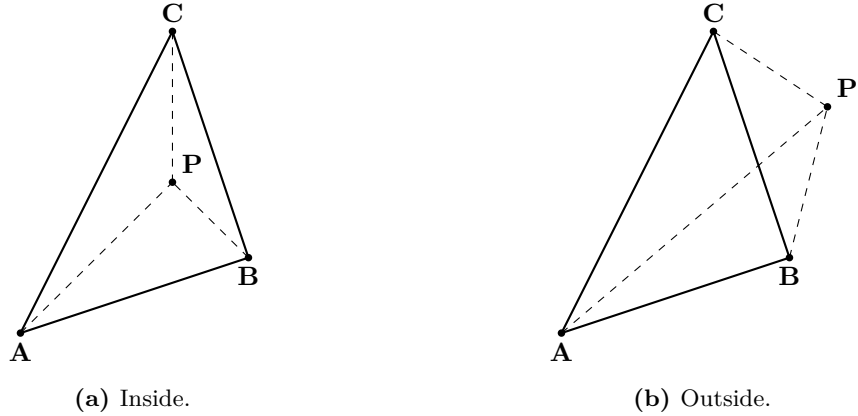
\begin{figure}[H]
        \centering
        \begin{subfigure}[H]{0.49\linewidth}
            \centering
            \begin{tikzpicture}
                \coordinate[label=below:$\mathbf{A}$] (A) at (0, 0);
                \coordinate[label=below:$\mathbf{B}$] (B) at (3, 1);
                \coordinate[label=above:$\mathbf{C}$] (C) at (2, 4);
                \coordinate[label=above right:$\mathbf{P}$] (P) at (2, 2);

                \draw[thick] (A) -- (B) -- (C) -- cycle;
                \draw[dashed] (A) -- (P);
                \draw[dashed] (B) -- (P);
                \draw[dashed] (C) -- (P);

                \draw[fill=black, thick] (A) circle[radius=1pt, thick];
                \draw[fill=black, thick] (B) circle[radius=1pt, thick];
                \draw[fill=black, thick] (C) circle[radius=1pt, thick];
                \draw[fill=black, thick] (P) circle[radius=1pt, thick];
            \end{tikzpicture}
            \caption{Inside.}
        \end{subfigure}
        \hfill
        \begin{subfigure}[H]{0.49\linewidth}
            \centering
            \begin{tikzpicture}
                \coordinate[label=below:$\mathbf{A}$] (A) at (0, 0);
                \coordinate[label=below:$\mathbf{B}$] (B) at (3, 1);
                \coordinate[label=above:$\mathbf{C}$] (C) at (2, 4);
                \coordinate[label=above right:$\mathbf{P}$] (P) at (3.5, 3);

                \draw[thick] (A) -- (B) -- (C) -- cycle;
                \draw[dashed] (A) -- (P);
                \draw[dashed] (B) -- (P);
                \draw[dashed] (C) -- (P);

                \draw[fill=black, thick] (A) circle[radius=1pt, thick];
                \draw[fill=black, thick] (B) circle[radius=1pt, thick];
                \draw[fill=black, thick] (C) circle[radius=1pt, thick];
                \draw[fill=black, thick] (P) circle[radius=1pt, thick];
            \end{tikzpicture}
            \caption{Outside.}
        \end{subfigure}
        \caption{Illustration of a point lying inside or outside a triangle.}
        \label{Fig: triangles}
    \end{figure}

    \subsection{Gr\"obner Bases}
    Gr\"obner bases are a tool to solve computational problems of ideals $I \subset k [x_1, \dots, x_n]$ like the membership problem, computation of the dimension or computation of the set of zeros.
    For the definition of Gr\"obner bases we first have to introduce term orders on $k [x_1, \dots, x_n]$.
    \begin{defn}[{Term Order, \cite[Chapter 2 \S 2 Definition 1]{Cox-Ideals}}]\label[defn]{Def: term order}
        Let $k$ be a field, a term order $>$ on $k [x_{1}, \dots, x_{n}] = k [\mathbf{x}]$ is a relation $>$ on $\Zn_{\geq 0}$, or equivalently, a relation on the set of monomials $\mathbf{x}^{\mathbf{a}} = \prod_{i = 1}^{n} x_{i}^{a_{i}}$, $\mathbf{a} \in \Zn_{\geq 0}$, satisfying:
        \begin{enumerate}[label=(\roman*)]
            \item $>$ is a total ordering on $\Zn_{\geq 0}$.

            \item If $\mathbf{a} > \mathbf{b}$ and $\mathbf{c} \in \Zn_{\geq 0}$, then $\mathbf{a} + \mathbf{c} > \mathbf{b} + \mathbf{c}$.

            \item $>$ is a well-ordering on $\Zn_{\geq 0}$.
            I.e., every non-empty subset of $\Zn_{\geq 0}$ has a smallest element under $>$.
        \end{enumerate}
    \end{defn}

    In this paper we work with the \emph{lexicographic} (LEX) and the \emph{degree reverse lexicographic} (DRL) term order:
    \begin{itemize}
        \item $\mathbf{a} >_{LEX} \mathbf{b}$ if the leftmost non-zero entry of $\mathbf{a} - \mathbf{b}$ is positive.

        \item $\mathbf{a} >_{DRL} \mathbf{b}$ if either $\sum_{i = 1}^{n} a_{i} > \sum_{i = 1}^{n} b_{i}$, or $\sum_{i = 1}^{n} a_{i} = \sum_{i = 1}^{n} b_{i}$ and the rightmost non-zero entry of $\mathbf{a} - \mathbf{b}$ is negative.
    \end{itemize}

    Let $f \in P = k [x_1, \dots, x_n]$ be a non-zero polynomial, and let $>$ be a term order on $P$.
    The largest monomial present in $f$ is called the \emph{$>$-leading monomial}, and the coefficient of the largest monomial is called \emph{$>$-leading coefficient}.
    We denote them as $\LM_> \left( f \right)$ and $\LC_> \left( f \right)$ respectively.
    Now we can recall the definition of a Gr\"obner basis.
    \begin{defn}[{Gr\"obner Basis, \cite[Chapter 2 \S 5 Definition 5]{Cox-Ideals}}]\label[defn]{Def: Groebner basis}
        Let $k$ be a field, let $I \subset P = k [x_{1}, \allowbreak \dots, x_{n}]$ be a non-zero ideal, and let $>$ be a term order on $P$.
        A finite subset $\mathcal{G} \subset I$ is said to be a $>$-Gr\"obner basis if
        \begin{equation*}
            \big( \LM_> (g) \; \big\vert \; g \in \mathcal{G} \big) = \big( \LM_> (f) \; \big\vert \; f \in I \big).
        \end{equation*}
    \end{defn}

    Next we describe a computational criterion to detect Gr\"obner bases due to Buchberger.
    Let $f, g \in P = k [x_1, \dots, x_n]$ and let $>$ be a term order on $P$.
    The \emph{S-polynomial} of $f$ and $g$ is defined as
    \begin{equation}
        S_> (f, g) = \frac{\lcm \big( \LM_> (f), \LM_> (g) \big)}{\LC_> (f)} \cdot f - \frac{\lcm \big( \LM_> (f), \LM_> (g) \big)}{\LC_> (g)} \cdot g.
    \end{equation}
    Buchberger's criterion requires the multivariate polynomial division algorithm, readers can find its description in \cite[Chapter~2~\S 3]{Cox-Ideals}.
    \begin{thm}[{Buchberger's Criterion, \cite[Chapter~2~\S 6~Theorem~6]{Cox-Ideals}}]\label[thm]{Th: Buchberger criterion}
        Let $k$ be a field, let $I \subset P = k [x_1 \dots, x_n]$, and let $>$ be a term order on $P$.
        Then a basis $\mathcal{G} = \{ g_1, \dots, g_m \} \subset I$ of $I$ is a $>$-Gr\"obner basis if and only if for all pairs $i \neq j$, the remainder on division of $S_> (g_i, g_j)$ by $\mathcal{G}$ (listed in some order) is zero.
    \end{thm}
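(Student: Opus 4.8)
The plan is to prove the two implications separately: the forward direction is routine, while the converse is the substantive part and rests on the classical cancellation lemma for sums of polynomials of equal multidegree. For the forward direction, suppose $\mathcal{G}$ is a $>$-Gr\"obner basis. First I would record that every $h \in I$ has remainder $0$ on division by $\mathcal{G}$: if the remainder $r$ were non-zero then $r = h - \sum_i q_i g_i \in I$ while no monomial of $r$ lies in $\big( \LM_> (g) \; \big\vert \; g \in \mathcal{G} \big)$, contradicting $\LM_> (r) \in \big( \LM_> (f) \; \big\vert \; f \in I \big) = \big( \LM_> (g) \; \big\vert \; g \in \mathcal{G} \big)$. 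Since $S_> (g_i, g_j) \in I$, applying this with $h = S_> (g_i, g_j)$ yields the claim.

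For the converse, assume every $S_> (g_i, g_j)$ reduces to $0$ modulo $\mathcal{G}$. By the degree guarantee of the multivariate division algorithm this is equivalent to the existence of a representation $S_> (g_i, g_j) = \sum_{k} A_{ijk} g_k$ with $\multideg (A_{ijk} g_k) \leq \multideg \big( S_> (g_i, g_j) \big)$ for every $k$. It suffices to show $\big( \LM_> (f) \; \big\vert \; f \in I \big) \subseteq \big( \LM_> (g_1), \dots, \LM_> (g_m) \big)$, so I fix a non-zero $f \in I$ and, among all expressions $f = \sum_{i = 1}^{m} h_i g_i$ with $h_i \in P$, choose one minimizing $\delta := \max_i \multideg (h_i g_i)$; such a minimum exists because $>$ is a well-ordering. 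Then $\multideg (f) \leq \delta$, and it is enough to prove equality, since in that case $\LM_> (f)$ coincides with the leading monomial of some $h_i g_i$ and is therefore divisible by $\LM_> (g_i)$.

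Suppose, toward a contradiction, that $\multideg (f) < \delta$. Separating the summands of top multidegree, the leading terms of $\sum_{\multideg (h_i g_i) = \delta} \LT (h_i) g_i$ must cancel. I would then invoke the cancellation lemma --- a sum of polynomials each of multidegree $\delta$ whose total sum has multidegree strictly below $\delta$ can be written as a $k$-linear combination of their pairwise $S$-polynomials, and each such $S$-polynomial has multidegree $< \delta$ --- to rewrite the top part as $\sum_{j, l} c_{jl} \cdot \mathbf{x}^{\gamma_{jl}} \cdot S_> (g_j, g_l)$ with every $\mathbf{x}^{\gamma_{jl}} S_> (g_j, g_l)$ of multidegree $< \delta$. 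Substituting the degree-bounded representations of the $S_> (g_j, g_l)$ supplied by the hypothesis and collecting terms yields a new expression $f = \sum_i \tilde{h}_i g_i$ with $\max_i \multideg (\tilde{h}_i g_i) < \delta$, contradicting the minimality of $\delta$. Hence $\multideg (f) = \delta$ and the proof is complete.

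The main obstacle is the multidegree bookkeeping hidden in the cancellation step: one must check that $\lcm \big( \LM_> (h_j g_j), \LM_> (h_l g_l) \big) = \mathbf{x}^\delta$ for the summands of top multidegree, identify each $S_> (h_j g_j, h_l g_l)$ with a monomial multiple $\mathbf{x}^{\gamma_{jl}} S_> (g_j, g_l)$, and verify that multiplying the bound $\multideg \big( S_> (g_j, g_l) \big) < \lcm \big( \LM_> (g_j), \LM_> (g_l) \big)$ through by $\mathbf{x}^{\gamma_{jl}}$ keeps all resulting terms strictly below $\delta$. Once this bookkeeping is in place the rest of the argument is purely formal, and it also explains why the criterion is insensitive to the order in which $\mathcal{G}$ is listed during division: only the existence of a degree-bounded representation of each $S_> (g_i, g_j)$ is used, not the particular remainder produced.
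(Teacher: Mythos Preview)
The paper does not prove this theorem; it is quoted verbatim from \cite[Chapter~2~\S 6~Theorem~6]{Cox-Ideals} as background and no proof is supplied. Your argument is precisely the standard textbook proof from that reference: the forward direction via ``every element of $I$ reduces to $0$ modulo a Gr\"obner basis'', and the converse via a minimal-$\delta$ representation together with the cancellation lemma for equal-multidegree summands, rewriting the top part through $S$-polynomials to force a contradiction. The sketch is correct and the bookkeeping caveats you flag (identifying $S_>(h_j g_j, h_l g_l)$ with a monomial multiple of $S_>(g_j, g_l)$ and tracking the strict inequality on multidegrees) are exactly the points one must verify; there is nothing to compare against in the paper itself.
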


    This criterion yields a very simple algorithm to compute Gr\"obner basis.
    For the input set $\mathcal{F}$, compute the remainders of all possible S-polynomials with respect to $\mathcal{F}$.
    If a remainder is non-zero, add it to $\mathcal{F}$ and repeat until Buchberger's criterion is satisfied.
    This procedure is known as Buchberger's algorithm \cite{Buchberger}.

    Let $>$ be a term order on $P = k [x_1, \dots, x_n]$.
    An ideal $I \subset P = k [x_1, \dots, x_n]$ is zero-dimensional, i.e.\ $\dim \left( P / I \right) = 0$ if and only if for every $1 \leq i \leq n$ there exists $f \in I$ and $d_i \in \Z_{\geq 1}$ such that $\LM_> \left( f \right) = x_i^{d_i}$, see \cite[Chapter~5~\S 3~Theorem~6]{Cox-Ideals} and \cite[Theorem 5.11]{Kemper-CommutativeAlgebra}.
    In particular, let $d = \dim_k \left( P / I \right) < \infty$ denote the $k$-vector space dimension of the quotient ring, then \cite[Chapter~5~\S 3~Proposition~7]{Cox-Ideals}
    \begin{equation}
        \abs{\mathcal{V} (I) \big( k \big)} = \abs{\big\{ \mathbf{x} \in k^n \; \big\vert \; \forall f \in I \!: f (\mathbf{x}) = 0 \big\}} \leq d,
    \end{equation}
    with equality if $k$ is algebraically closed and $I$ is radical.

    For a general introduction into the theory of Gr\"obner bases we refer the reader to \cite{Cox-Ideals,Kreuzer-CompAlg1,Kreuzer-CompAlg2}.

    \section{Irreducibility of \texorpdfstring{$1$}{1}-Boomerang Polynomials}\label{Sec: Irreducibility of Boomerang Polynomials}
    We now prove our first main result of this paper, the absolute irreducibility of polynomials $f (x) - f (y) + a$, which yields the $1$-Boomerang uniformity bound by B\'ezout's theorem.

    As preparation, we recall that irreducibility of a polynomial is invariant under linear variable transformations.
	\begin{lem}\label[lem]{Lem: change of variables irreducible}
		Let $k$ be a field, let $f \in k [\mathbf{x}] = k [x_1, \dots, x_n]$, and let $\mathbf{M} \in \Fqnxn$ be such that $\det \left( \mathbf{M} \right) \neq 0$.
		Then $f (\mathbf{x})$ is irreducible if and only if $f (\mathbf{M} \mathbf{x})$ is irreducible
	\end{lem}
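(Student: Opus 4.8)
The plan is to realize the substitution $\mathbf{x} \mapsto \mathbf{M}\mathbf{x}$ as a $k$-algebra automorphism of $k[\mathbf{x}]$ and then invoke the elementary fact that ring automorphisms carry irreducible elements to irreducible elements. For any $\mathbf{N} \in k^{n \times n}$ define the $k$-algebra homomorphism
\[
    \phi_{\mathbf{N}} \colon k[\mathbf{x}] \to k[\mathbf{x}], \qquad g(\mathbf{x}) \mapsto g(\mathbf{N}\mathbf{x}),
\]
which is well defined since the entries of $\mathbf{N}\mathbf{x}$ are linear forms, and which restricts to the identity on $k$. First I would record the composition law $\phi_{\mathbf{M}} \circ \phi_{\mathbf{N}} = \phi_{\mathbf{N}\mathbf{M}}$, immediate from $\big(\phi_{\mathbf{M}} \circ \phi_{\mathbf{N}}\big)(g)(\mathbf{x}) = \phi_{\mathbf{N}}(g)(\mathbf{M}\mathbf{x}) = g(\mathbf{N}\mathbf{M}\mathbf{x})$. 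Since $\det(\mathbf{M}) \neq 0$, the matrix $\mathbf{M}^{-1}$ exists and $\phi_{\mathbf{M}} \circ \phi_{\mathbf{M}^{-1}} = \phi_{\mathbf{M}^{-1}\mathbf{M}} = \phi_{\mathbf{I}_n} = \id_{k[\mathbf{x}]}$, and symmetrically in the other order; hence $\phi_{\mathbf{M}}$ is a $k$-algebra automorphism with inverse $\phi_{\mathbf{M}^{-1}}$.

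Next I would use that an automorphism of an integral domain preserves the group of units and the property of being irreducible. The units of $k[\mathbf{x}]$ are exactly $k^\times$, and $\phi_{\mathbf{M}}$ fixes $k$ pointwise, so it maps $k^\times$ bijectively onto $k^\times$; in particular $f \in k^\times$ if and only if $\phi_{\mathbf{M}}(f) \in k^\times$. Now assume $f(\mathbf{x})$ is irreducible, so $f \notin k^\times$ and hence $\phi_{\mathbf{M}}(f) \notin k^\times$. Any factorization $\phi_{\mathbf{M}}(f) = g \cdot h$ pulls back under $\phi_{\mathbf{M}}^{-1} = \phi_{\mathbf{M}^{-1}}$ to $f = \phi_{\mathbf{M}^{-1}}(g) \cdot \phi_{\mathbf{M}^{-1}}(h)$; irreducibility of $f$ forces $\phi_{\mathbf{M}^{-1}}(g) \in k^\times$ or $\phi_{\mathbf{M}^{-1}}(h) \in k^\times$, and by the unit-preservation just noted the corresponding $g$ or $h$ lies in $k^\times$. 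Thus $\phi_{\mathbf{M}}(f) = f(\mathbf{M}\mathbf{x})$ is irreducible. The converse follows by applying this forward implication to $f(\mathbf{M}\mathbf{x})$ together with the matrix $\mathbf{M}^{-1}$, since $f(\mathbf{M}^{-1}\mathbf{M}\mathbf{x}) = f(\mathbf{x})$.

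There is no genuine obstacle for a statement of this kind; the only two points that require a moment's care are the bookkeeping of the composition identity $\phi_{\mathbf{M}} \circ \phi_{\mathbf{N}} = \phi_{\mathbf{N}\mathbf{M}}$, so that invertibility of $\mathbf{M}$ really does force invertibility of $\phi_{\mathbf{M}}$, and the identification of the units of $k[\mathbf{x}]$ with $k^\times$, which guarantees that a nontrivial factorization is transported faithfully in both directions. The argument uses nothing about $\mathbf{M}$ beyond having entries in the coefficient field and nonzero determinant.
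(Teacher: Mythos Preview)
Your proof is correct and follows essentially the same approach as the paper: both realize the linear substitution as a $k$-algebra automorphism of $k[\mathbf{x}]$ and then transport factorizations through it, using that the units are exactly $k^\times$. Your version is somewhat more careful in spelling out the composition law $\phi_{\mathbf{M}} \circ \phi_{\mathbf{N}} = \phi_{\mathbf{N}\mathbf{M}}$ and in pulling back an arbitrary factorization $g \cdot h$ rather than one already written as $\phi(f_1)\cdot\phi(f_2)$, but the underlying idea is identical.
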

	\begin{proof}
        Let $\mathbf{y} = \mathbf{M} \mathbf{x}$, then we have a ring isomorphism $\phi: k [\mathbf{x}] \to k [\mathbf{y}]$.
        Now assume that $\phi (f) = \phi (f_1) \cdot \phi (f_2)$, since $\phi$ is an isomorphism we have $f = f_1 \cdot f_2$.
        If $f$ is irreducible, then either $f_1 \in k^\times$ or $f_2 \in k^\times$.
        Since $\phi (k^\times) = k^\times$ this implies that $\phi (f)$ is irreducible too.
        The other direction follows identical.
	\end{proof}

    The irreducibility proof is an application of the polytope method.
	\begin{prop}\label[prop]{Prop: irreducible polynomial}
        Let $k$ be a field of characteristic $p \in \Z_{\geq 0}$.
		\begin{enumerate}
			\item Let $d \in \Z_{\geq 2}$, and let $i_1, i_2 \in \Z_{\geq 1}$.
			Then the polytope
			\begin{equation*}
				\Bigg<
				\begin{pmatrix}
					0 \\ 0
				\end{pmatrix}
				,
				\begin{pmatrix}
					1 \\ d - 1
				\end{pmatrix}
				,
				\begin{pmatrix}
					d \\ 0
				\end{pmatrix}
				,
				\begin{pmatrix}
					i_1 \\ i_2
				\end{pmatrix}
				\Bigg>_{2 \leq i_1 + i_2 \leq d}
				=
				\Bigg<
				\begin{pmatrix}
					0 \\ 0
				\end{pmatrix}
				,
				\begin{pmatrix}
					1 \\ d - 1
				\end{pmatrix}
				,
				\begin{pmatrix}
					d \\ 0
				\end{pmatrix}
				\Bigg> \subset \R^2
			\end{equation*}
			is an integrally indecomposable triangle.

			\item Let $f \in k [X]$ with $d = \degree{f} > 0$, and let $a \in k^\times$.
			If $p > 0$, then assume in addition that $\gcd \left( d, p \right) = 1$.
			Then
			\begin{equation*}
				F = f (x) - f (y) + a \in k [x, y]
			\end{equation*}
			is absolutely irreducible over $k$.
		\end{enumerate}
	\end{prop}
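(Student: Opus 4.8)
\emph{Part (1).} The inclusion $\supseteq$ is immediate since the right-hand vertex set is contained in the left-hand one, so it suffices to show that every lattice point $\mathbf{P} = (i_1, i_2)^\intercal$ with $i_1, i_2 \ge 1$ and $i_1 + i_2 \le d$ lies in the triangle $\Delta := \braket{(0,0)^\intercal, (1, d-1)^\intercal, (d, 0)^\intercal}$. The pairwise intersections of the three edge lines are exactly the three vertices, so $\Delta$ is the set of $(x,y)^\intercal$ with $y \ge 0$, $y \le (d-1)x$ and $x + y \le d$; for $\mathbf{P}$ the first holds as $i_2 \ge 1$, the second since $i_2 \le d - i_1 \le d-1 \le (d-1)i_1$, and the third is the hypothesis (alternatively one can run the area test from \Cref{Sec: Triangles}, checking $\mathcal{A}_{\mathbf{A}\mathbf{B}\mathbf{P}} + \mathcal{A}_{\mathbf{A}\mathbf{P}\mathbf{C}} + \mathcal{A}_{\mathbf{P}\mathbf{B}\mathbf{C}} = \mathcal{A}_\Delta = \tfrac12 d(d-1)$). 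Integral indecomposability then follows from \Cref{Cor: triangle indecomposable} applied to $\mathbf{v}_0 = (0,0)^\intercal$, $\mathbf{v}_1 = (1,d-1)^\intercal$, $\mathbf{v}_2 = (d,0)^\intercal$: these are not collinear (the area $\tfrac12 d(d-1)$ is nonzero for $d \ge 2$), and $\gcd(\mathbf{v}_0 - \mathbf{v}_1, \mathbf{v}_0 - \mathbf{v}_2) = \gcd(-1, 1-d, -d, 0) = 1$ because $\mathbf{v}_0 - \mathbf{v}_1$ already has an entry $\pm 1$.

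\emph{Part (2), reduction.} For $d = 1$ the polynomial $F$ is non-constant of total degree $1$, hence absolutely irreducible, so assume $d \ge 2$. One cannot apply \Cref{Th: absolutely irreducible} to $F$ as written: the Newton polytope of $F = f(x) - f(y) + a$ is the symmetric triangle $\braket{(0,0)^\intercal, (d,0)^\intercal, (0,d)^\intercal}$, which by \Cref{Cor: triangle indecomposable} is integrally \emph{decomposable} for $d \ge 2$ (its defining gcd is $d$). The plan is to break the symmetry with the unimodular shear $(x, y) \mapsto (x+y, y)$: set $G := F(x+y, y) = f(x+y) - f(y) + a$. Since this substitution is invertible and linear over every extension of $k$, \Cref{Lem: change of variables irreducible} reduces absolute irreducibility of $F$ to that of $G$.

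\emph{Part (2), the polytope of $G$.} Write $f = \sum_{j=0}^{d} c_j X^j$ with $c_d \neq 0$. In $f(x+y) - f(y) = \sum_j c_j\big((x+y)^j - y^j\big)$ every pure power of $y$ cancels, so the monomials of $G$ are the constant $a$ together with the $c_j \binom{j}{k} x^k y^{j-k}$ for $c_j \neq 0$ and $1 \le k \le j$. Such an exponent vector $(k, j-k)^\intercal$ satisfies $k \ge 1$, $j-k \ge 0$, $j-k \le d-1 \le (d-1)k$ and $k+(j-k) \le d$, hence lies in $\Delta$; adjoining the vertex $(0,0)^\intercal$, realized by $a \neq 0$, gives $\mathcal{N}_G \subseteq \Delta$. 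Conversely the vertices $(d,0)^\intercal$ (from $c_d x^d$) and $(1,d-1)^\intercal$ (from $c_d\binom{d}{1} x y^{d-1} = d\,c_d\, x y^{d-1}$) are actually attained — this is the only place the hypothesis $p \nmid d$ is used, guaranteeing $d \neq 0$ in $k$ — so $\mathcal{N}_G = \Delta$, integrally indecomposable by Part (1). Finally $G$ is divisible by neither variable, since $G(0,y) = a \neq 0$ and $G(x,0) = f(x) - f(0) + a$ has nonzero constant term $a$; hence \Cref{Th: absolutely irreducible} yields that $G$, and therefore $F$, is absolutely irreducible.

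\emph{Main obstacle.} The crux is that $F$ itself is too symmetric for Gao's criterion, and the shear $F \mapsto f(x+y) - f(y) + a$ is the key trick. After that the two delicate points are (a) that all pure powers of $y$ genuinely cancel in $G$ — otherwise a vertex on the $y$-axis would reinflate $\mathcal{N}_G$ back to the decomposable triangle — and (b) that the coefficient of $x y^{d-1}$ in $G$ equals $d\,c_d$, nonzero precisely because $p \nmid d$, which is exactly what keeps the vertex $(1,d-1)^\intercal$ in the Newton polytope.
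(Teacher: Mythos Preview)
Your proposal is correct and follows essentially the same approach as the paper: the shear $(x,y)\mapsto(x+y,y)$ to break the symmetry of the Newton polytope, identification of $\mathcal{N}_G$ with the triangle $\Delta=\braket{(0,0)^\intercal,(1,d-1)^\intercal,(d,0)^\intercal}$, Gao's criterion via \Cref{Cor: triangle indecomposable}, and \Cref{Lem: change of variables irreducible} to pull the conclusion back to $F$. The only difference is cosmetic: for Part~(1) the paper carries out the area test of \Cref{Sec: Triangles} in full, whereas you use the equivalent half-plane description $\{y\ge 0,\ y\le (d-1)x,\ x+y\le d\}$ of $\Delta$ (and mention the area test as an alternative); you are also slightly more explicit than the paper in verifying the ``not divisible by any $x_i$'' hypothesis of \Cref{Th: absolutely irreducible}.
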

	\begin{proof}
		We first consider the polynomial $F$.
		Let $f = \sum_{i = 1}^{d} a_i \cdot x^i$,\footnote{
            Since the constant term of $f$ is canceled in $F$ we can without loss of generality assume that $f (0) = 0$.
            }
        then
		\begin{align*}
			F (x + y, y) &= f (x + y) - f (y) + a \\
			&= \left( \sum_{i = 1}^{d} a_i \cdot (x + y)^i \right) - f (y) + a \\
			&= \left( f (x) + \sum_{i = 2}^{d} a_i \cdot \sum_{j = 1}^{i - 1} \binom{i}{j} \cdot x^j \cdot y^{i - j} + f (y) \right) - f(y) + a \\
			&= f (x) + \sum_{i = 2}^{d} a_i \cdot \sum_{j = 1}^{i - 1} \binom{i}{j} \cdot x^j \cdot y^{i - j} + a.
		\end{align*}
        In particular, in characteristic $p > 0$ we have by assumption for $i = d$ and $j = 1$ that $\binom{i}{j} = \binom{d}{1} = d \not\equiv 0 \mod p$, i.e.\ the monomial $x \cdot y^{d - 1}$ is present in $F (x + y, y)$.
		Therefore, all exponents which can be present in $F (x + y, y)$ are contained in the set
		\begin{equation*}
			\left\{
			\begin{pmatrix}
				0 \\ 0
			\end{pmatrix}
			,
			\begin{pmatrix}
				j \\ 0
			\end{pmatrix}
			,
			\begin{pmatrix}
				i_1 \\ i_2
			\end{pmatrix}
			\right\}_{\subalign{1 \leq &j \leq d, \\ 2 \leq &i_1 + i_2 \leq d}}.
		\end{equation*}
		Recall that for a convex set $\mathcal{C} \subset \R^n$ and a point $\mathbf{p} \in \mathcal{C}$ we have for the convex closure that $\braket{\mathcal{C}, \mathbf{p}} = \mathcal{C}$.
		Thus,
		\begin{equation}\label{Equ: asserted triangle}
			\Bigg<
			\begin{pmatrix}
				0 \\ 0
			\end{pmatrix}
			,
			\begin{pmatrix}
				j \\ 0
			\end{pmatrix}
			,
			\begin{pmatrix}
				i_1 \\ i_2
			\end{pmatrix}
			\Bigg>_{\subalign{1 \leq &j \leq d, \\ 2 \leq &i_1 + i_2 \leq d}}
			= \Bigg<
			\begin{pmatrix}
				0 \\ 0
			\end{pmatrix}
			,
			\begin{pmatrix}
				1 \\ d - 1
			\end{pmatrix}
			,
			\begin{pmatrix}
				d \\ 0
			\end{pmatrix}
			,
			\begin{pmatrix}
				i_1 \\ i_2
			\end{pmatrix}
			\Bigg>_{2 \leq i_1 + i_2 \leq d}.
		\end{equation}
        (The points $(j, 0)^\intercal$ trivially lie on the line from $(0, 0)^\intercal$ to $(d, 0)^\intercal$.)
		Now we want to prove that this polytope is indeed a triangle.
		In \Cref{Fig: triangle illustration} we provide an illustration of the polytope from the assertion.
		\begin{figure}[H]
			\centering
			\begin{tikzpicture}
				\coordinate[label=below:${(0, 0)}$] (A) at (0, 0);
				\coordinate[label=above:${(1, d - 1)}$] (B) at (1, 7 - 1);
				\coordinate[label=below:${(d, 0)}$] (C) at (7, 0);

				\draw[thick] (A) -- (B) -- (C) -- cycle;

				\draw[fill=black, thick] (A) circle[radius=1pt, thick];
				\draw[fill=black, thick] (B) circle[radius=1pt, thick];
				\draw[fill=black, thick] (C) circle[radius=1pt, thick];

				\draw[fill=black, thick] (1, 5) circle[radius=1pt, thick];
				\draw[fill=black, thick] (2, 5) circle[radius=1pt, thick];

				\draw[fill=black, thick] (1, 4) circle[radius=1pt, thick];
				\draw[fill=black, thick] (2, 4) circle[radius=1pt, thick];
				\draw[fill=black, thick] (3, 4) circle[radius=1pt, thick];

				\draw[thick, loosely dotted] (1.5, 3.5) -- (1.5, 2.5);
				\draw[thick, loosely dotted] (2.5, 3.5) -- (2.5, 2.5);

				\draw[black, thick] (1, 2) circle[radius=1pt, thick];
				\draw[fill=black, thick] (2, 2) circle[radius=1pt, thick];
				\draw[fill=black, thick] (3, 2) circle[radius=1pt, thick];
				\draw[fill=black, thick] (4, 2) circle[radius=1pt, thick];
				\draw[fill=black, thick] (5, 2) circle[radius=1pt, thick];

				\draw[fill=black, thick] (1, 1) circle[radius=1pt, thick];
				\draw[fill=black, thick] (2, 1) circle[radius=1pt, thick];
				\draw[fill=black, thick] (3, 1) circle[radius=1pt, thick];
				\draw[fill=black, thick] (4, 1) circle[radius=1pt, thick];
				\draw[fill=black, thick] (5, 1) circle[radius=1pt, thick];
				\draw[fill=black, thick] (6, 1) circle[radius=1pt, thick];
			\end{tikzpicture}
			\caption{Illustration of the triangle.}
			\label{Fig: triangle illustration}
		\end{figure}
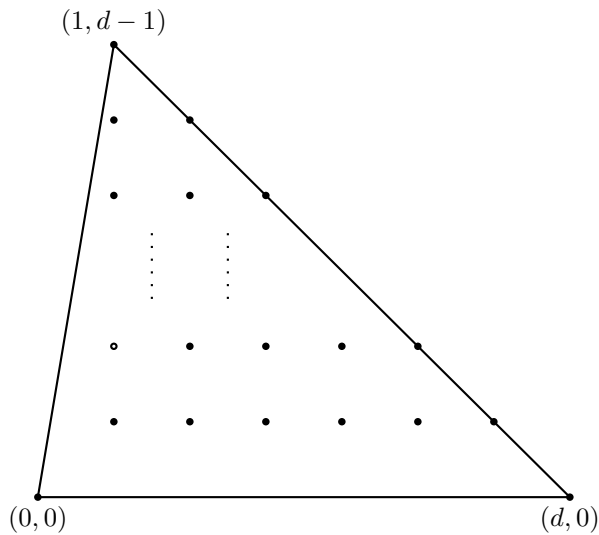
		For ease of notation, let $\Delta = \Bigg<
		\begin{pmatrix}
			0 \\ 0
		\end{pmatrix}
		,
		\begin{pmatrix}
			1 \\ d - 1
		\end{pmatrix}
		,
		\begin{pmatrix}
			d \\ 0
		\end{pmatrix}
		\Bigg>
		$.
		It is clear that every point $\mathbf{p} = \left( i_1, i_2 \right)^\intercal$ with $i_1 + i_2 = d$ lies on the line from $\left( 1, d - 1 \right)^\intercal$ to $\left( d, 0 \right)^\intercal$, so they are indeed in the triangle.
		For the other points $\mathbf{p} = \left( i_1, i_2 \right)^\intercal$ with $2 \leq i_1 + i_2 < d$, we consider the triangles
		\begin{equation*}
			\Delta_1 =
			\Bigg<
			\mathbf{p},
			\begin{pmatrix}
				1 \\ d - 1
			\end{pmatrix}
			,
			\begin{pmatrix}
				d \\ 0
			\end{pmatrix}
			\Bigg>, \qquad
			\Delta_2 =
			\Bigg<
			\begin{pmatrix}
				0 \\ 0
			\end{pmatrix}
			,
			\mathbf{p}
			,
			\begin{pmatrix}
				d \\ 0
			\end{pmatrix}
			\Bigg>, \qquad
			\Delta_3 =
			\Bigg<
			\begin{pmatrix}
				0 \\ 0
			\end{pmatrix}
			,
			\begin{pmatrix}
				1 \\ d - 1
			\end{pmatrix}
			,
			\mathbf{p}
			\Bigg>,
		\end{equation*}
		and verify whether we have the area identity
		\begin{equation*}
			\mathcal{A}_{\Delta} = \mathcal{A}_{\Delta_1} + \mathcal{A}_{\Delta_2} + \mathcal{A}_{\Delta_3},
		\end{equation*}
		or not, see \Cref{Sec: Triangles} and \Cref{Fig: triangles}.
		Let $k = i_1 + i_2$, then we have the triangle areas, see \Cref{Sec: Triangles},
		\begin{alignat*}{2}
			\mathcal{A}_{\Delta} &= \frac{1}{2} \cdot \abs{\det
				\begin{pmatrix}
					d & 0 \\
					1 & d - 1
				\end{pmatrix}
			}
			&&= \frac{d \cdot (d - 1)}{2}, \\
			\mathcal{A}_{\Delta_1} &= \frac{1}{2} \cdot \abs{\det
				\begin{pmatrix}
					d - i_1 & -i_2 \\
					1 - i_1 & d - 1 - i_2
				\end{pmatrix}
			}
			&&= \frac{\abs{d \cdot (d - 1) - k \cdot (d - 1)}}{2}, \\
			\mathcal{A}_{\Delta_2} &= \frac{1}{2} \cdot \abs{\det
				\begin{pmatrix}
					d & 0 \\
					i_1 & i_2
				\end{pmatrix}
			}
			&&= \frac{i_2 \cdot d}{2}, \\
			\mathcal{A}_{\Delta_3} &= \frac{1}{2} \cdot \abs{\det
				\begin{pmatrix}
					i_1 & i_2 \\
					1 & d - 1
				\end{pmatrix}
			}
			&&= \frac{\abs{i_1 \cdot d - k}}{2}.
		\end{alignat*}
		Since $i_1 \geq 1$ and $k \leq d$ we can drop the absolute values in $\mathcal{A}_{\Delta_1}$ and $\mathcal{A}_{\Delta_3}$.
		Now we observe that
		\begin{equation*}
			\frac{d \cdot (d - 1) - k \cdot (d - 1)}{2} + \frac{i_2 \cdot d}{2} +  \frac{i_1 \cdot d - k}{2} = \frac{d \cdot (d - 1)}{2}.
		\end{equation*}
		So, all the points from the assertion indeed are inside the triangle $\Delta$ or its boundary, hence the convex closure from \Cref{Equ: asserted triangle} is indeed a triangle.

		For integral indecomposability of $\Delta$, note that
		\begin{equation*}
			\gcd \Bigg(
			\begin{pmatrix}
				1 \\ d - 1
			\end{pmatrix}
			-
			\begin{pmatrix}
				0 \\ 0
			\end{pmatrix}
			,
			\begin{pmatrix}
				d \\ 0
			\end{pmatrix}
			-
			\begin{pmatrix}
				0 \\ 0
			\end{pmatrix}
			\Bigg)
			= \gcd \left( 0, 1, d - 1, d \right) = 1,
		\end{equation*}
		and the claim follows from \Cref{Cor: triangle indecomposable}.
		This proves (1).

		Returning to $F (x + y, y)$, due to the assumption that $\gcd \left( d, p \right) = 1$ if $p > 0$ the monomials $1, x^d, x \cdot y^{d - 1}$ have non-zero coefficients.
		Combining all our previous observation we conclude that $\Delta$ is exactly the Newton polytope of $F (x + y, y)$.
		So, absolute irreducibility of $F (x + y, y)$ follows from \Cref{Th: absolutely irreducible}, and absolute irreducibility of $F (x, y)$ follows from \Cref{Lem: change of variables irreducible}.
		This proves (2).
	\end{proof}

    Now we apply B\'ezout's theorem to the $1$-Boomerang polynomials.
    \begin{thm}\label[thm]{Th: degree bound c = 1}
    	Let $\Fq$ be a finite field, let $f \in \Fq [x]$ with $d = \degree{f} > 0$ be a polynomial such that $\gcd \left( d, q \right) = 1$, and let $a, b \in \Fqx$.
    	Then ${}_1 \mathcal{B}_f (a, b) \leq d \cdot (d - 2)$.
    \end{thm}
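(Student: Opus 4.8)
The plan is to translate the two defining equations of ${}_1\mathcal{B}_f(a,b)$ into a pair of bivariate polynomial equations whose degrees are $d$ and $d-2$, and then apply B\'ezout's theorem (\Cref{Th: Bezout}), with \Cref{Prop: irreducible polynomial} supplying the essential irreducibility input. First I would substitute $z = x + y$; since this is a bijection of $\Fq^2$, the quantity ${}_1\mathcal{B}_f(a,b)$ equals the number of $(x,z) \in \Fq^2$ satisfying $f(z) - f(x) = b$ and $f(z+a) - f(x+a) = b$. Subtracting the two equations and writing $g(w) := f(w+a) - f(w)$, this system is equivalent to $\{\, f(z) - f(x) - b = 0,\ g(z) - g(x) = 0 \,\}$. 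A short computation shows that the coefficient of $w^{d-1}$ in $g$ equals $\LC(f)\cdot d\cdot a$, which is nonzero because $\gcd(d,q) = 1$ and $a \neq 0$; hence $\deg g = d-1$.

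The step that turns the naive bound $d^2$ into $d(d-2)$ is to split off the trivial factor $z - x$ from $g(z) - g(x)$. I would write $g(z) - g(x) = (z - x)\, h(z,x)$ with $h \in \Fq[x,z]$ of total degree $d-2$; the same coefficient computation gives that the $z^{d-2}$-coefficient of $h$ is $\LC(f)\cdot d\cdot a \neq 0$, so $h \neq 0$. Because $b \neq 0$, every solution satisfies $z \neq x$ (otherwise $f(z) - f(x) = 0 \neq b$), so on the solution set the equation $g(z) - g(x) = 0$ is equivalent to $h(z,x) = 0$. Consequently ${}_1\mathcal{B}_f(a,b)$ equals the number of $\Fq$-rational points of $\mathcal{V}(F_1, h)$, where $F_1 := f(z) - f(x) - b$ has degree $d$ and $h$ has degree $d-2$.

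To conclude I would invoke \Cref{Prop: irreducible polynomial} with additive constant $-b \neq 0$ (and $\gcd(d, \cha \Fq) = 1$, which follows from $\gcd(d, q) = 1$), after renaming the variables, to obtain that $F_1$ is absolutely irreducible over $\Fq$. Since $h \neq 0$ and $\deg h = d-2 < d = \deg F_1$, any common irreducible factor of $F_1$ and $h$ would have to be an associate of $F_1$, which is impossible for degree reasons, so $F_1$ and $h$ are coprime; moreover the degree-$d$ form of $F_1$ is $\LC(f)\,(z^d - x^d)$ and the degree-$(d-2)$ form of $h$ is $\LC(f)\cdot d\cdot a\,(z^{d-2} + \dots + x^{d-2})$, both nonzero, so neither homogenization is divisible by the variable at infinity and therefore $F_1^\homog$ and $h^\homog$ share no factor. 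B\'ezout's theorem then yields $i\big(\mathcal{V}_+(F_1^\homog), \mathcal{V}_+(h^\homog)\big) = d(d-2)$ in $\ProjSp{2}{\Fq}$, and since each $\Fq$-rational affine common zero of $\{F_1, h\}$ is a distinct closed point of this intersection contributing at least $1$ to the intersection number, we obtain ${}_1\mathcal{B}_f(a,b) \le d(d-2)$. (For $d = 2$, where $h$ is a nonzero constant, the system $\{F_1 = 0, h = 0\}$ has no solution and the bound holds trivially; the statement is to be read with $d \ge 2$.)

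I expect the only delicate points to be bookkeeping: checking that the factor $z - x$ may legitimately be discarded — this is precisely where $b \neq 0$ enters, and one must note that although $z - x$ and $F_1^\homog$ do intersect, namely at the single point at infinity $[0:1:1]$, that point is not affine and hence irrelevant to the count — and checking that homogenizing and passing to $\overline{\Fq}$ for B\'ezout can only over-count the $\Fq$-rational affine solutions. The genuine content, the absolute irreducibility of $f(z) - f(x) - b$, is already furnished by \Cref{Prop: irreducible polynomial}, so no further hard work is required here.
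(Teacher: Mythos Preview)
Your proof is correct and follows essentially the same route as the paper: substitute $z=x+y$, replace the second equation by the difference $F_2-F_1=(z-x)\cdot G_2$ and discard the factor $z-x$ using $b\neq 0$, invoke \Cref{Prop: irreducible polynomial} for the absolute irreducibility of $F_1$, and finish with B\'ezout's theorem after homogenizing. The only cosmetic differences are that you compute the leading coefficient of $g$ explicitly and treat the case $d=2$ separately, whereas the paper passes to $\overline{\Fq}$ before counting; both arguments are equivalent.
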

    \begin{proof}
    	Let $a, b \in \Fqx$.
        We consider the $1$-Boomerang equations with the substitution $z = x + y$
    	\begin{align*}
    		F_1 &= f (z) - f (x) - b, \\
    		F_2 &= f (z + a) - f (x + a) - b,
    	\end{align*}
        as polynomials in $\Fq [x, z]$.
    	By \Cref{Prop: irreducible polynomial} $F_1$ is absolutely irreducible.
    	Now let us consider
    	\begin{equation*}
    		F_2 - F_1 = \big( f (z + a) - f (z) \big) - \big( f (x + a) - f (x) \big).
    	\end{equation*}
    	Then, all points $z = x$ are roots of this polynomial.
    	In particular, via elementary methods, see e.g.\ \cite[Lemma~3.5]{Steiner-Boomerang} we can prove that $F_2 - F_1 = (z - x) \cdot G_2$, where $G_2 \in \Fq [x, z]$.
    	Let us substitute $z = x$ into $F_1$ which yields $b = 0$.
        Since $b \in \Fqx$ the boomerang equations do not have a symmetric solution.
    	Therefore, we can replace $F_2 - F_1$ by $G_2 = \frac{F_2 - F_1}{z - x}$.
    	Note that $\degree{G_2} \leq d - 2$.

    	Now we homogenize $F_1$ and $G_2$ with respect to a new variable $X_0$ and consider $F_1^\homog, G_2^\homog \in \Fq [X_0, X, Z]$ as plane curves in $\ProjSp{2}{\overline{\Fq}}$.
    	Since factorization is invariant under homogenization \cite[Proposition~4.3.2]{Kreuzer-CompAlg2} $F_1^\homog$ is still irreducible.
    	Since $\degree{G_2^\homog} = \degree{G_2} < \degree{F_1} = \degree{F_1^\homog}$ these curves cannot have a common irreducible component.
    	So, by B\'ezout's theorem (\Cref{Th: Bezout}) we have
    	\begin{align*}
    		i \Big( \mathcal{V}_+ \big( F_1^\homog \big), \mathcal{V}_+ \big( G_2^\homog \big) \Big)
    		&= \degree{F_1} \cdot \degree{G_2} \\
    		&= d \cdot (d - 2) \\
    		&= \sum_{z \in \mathcal{V}_+ \left( F_1^\homog \right) \cap \mathcal{V}_+ \left( G_2^\homog \right)} \dim_k \left( \mathcal{O}_{\mathcal{V}_+ \left( F_1^\homog \right) \cap \mathcal{V}_+ \left( G_2^\homog \right), z} \right)\\
    		&\geq \sum_{z \in \mathcal{V}_+ \left( F_1^\homog \right) \cap \mathcal{V}_+ \left( G_2^\homog \right)} 1,
    	\end{align*}
    	where the inequality follows from $\overline{\Fq} \hookrightarrow \mathcal{O}_{\mathcal{V}_+ \left( F_1^\homog \right) \cap \mathcal{V}_+ \left( G_2^\homog \right), z}$.
    	It follows from zero-dimensionality \cite[Lemmma~5.59]{Goertz-AlgGeom} that all points in $\mathcal{V}_+ \left( F_1^\homog \right) \cap \mathcal{V}_+ \left( G_2^\homog \right)$ \cite[Lemmma~5.59]{Goertz-AlgGeom} are closed.
    	Now recall the $\overline{\Fq}$-valued points of the intersection, see \Cref{Ex: k-valued points},
    	\begin{align*}
    		\Big( \mathcal{V}_+ \big( F_1^\homog \big) \cap \mathcal{V}_+ \big( G_2^\homog \big) \Big) \big( \overline{\Fq} \big)
    		&= \mathcal{V}_+ \left( F_1^\homog, G_2^\homog \right) \big( \overline{\Fq} \big) \\
    		&= \Big\{ \mathbf{x} \in \ProjSp{2}{\overline{\Fq}} \big( \overline{\Fq} \big) \; \Big\vert \; F_1^\homog (\mathbf{x}) = G_2^\homog (\mathbf{x}) = 0 \Big\}.
    	\end{align*}
    	Over an algebraically closed field $k$ the $k$-valued points are in one to one bijection with the closed points \cite[Corollary~3.36]{Goertz-AlgGeom}.
    	Therefore,
    	\begin{equation*}
    		\abs{\Big\{ \mathbf{x} \in \ProjSp{2}{\overline{\Fq}} \big( \overline{\Fq} \big) \; \Big\vert \; F_1^\homog (\mathbf{x}) = G_2^\homog (\mathbf{x}) = 0 \Big\}} \leq d \cdot (d - 2).
    	\end{equation*}
    	Finally, let $U_0 = \Big\{ (X_0, X, Z) \in \ProjSp{2}{\overline{\Fq}} \; \Big\vert \; X_0 \neq 0 \Big\}$, then we have \cite[Chapter~8~\S 2~Proposition~6]{Cox-Ideals}
    	\begin{equation*}
    		\Big\{ \mathbf{x} \in \ProjSp{2}{\overline{\Fq}} \big( \overline{\Fq} \big) \; \Big\vert \; F_1^\homog (\mathbf{x}) = G_2^\homog (\mathbf{x}) = 0 \Big\} \cap U_0
    		= \Big\{ \mathbf{x} \in \overline{\Fq}^2 \; \Big\vert \; F_1 (\mathbf{x}) = G_2 (\mathbf{x}) = 0 \Big\},
    	\end{equation*}
    	so also the number of affine solutions of the $1$-Boomerang polynomials is bounded by $d \cdot (d - 2)$.
    \end{proof}

    In characteristic $2$ the bound can be slightly improved.
    \begin{cor}\label[cor]{Cor: binary degree bound c = 1}
        Let $\Fq$ be a finite field of characteristic $2$, let $f \in \Fq [X]$ with $d = \degree{f} > 0$ be a polynomial such that $\gcd \left( d, q \right) = 1$, and let $a, b \in \Fqx$.
        Then ${}_1 \mathcal{B}_f (a, b) \leq d \cdot (d - 2) - 1$.
    \end{cor}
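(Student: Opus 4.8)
The plan is to rerun the proof of \Cref{Th: degree bound c = 1} almost verbatim, and to observe that in characteristic $2$ the projective intersection is forced to contain one extra point on the line at infinity, which therefore is \emph{not} a $1$-Boomerang solution and so is lost from the affine count.

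I would keep all notation from the proof of \Cref{Th: degree bound c = 1}: with $z = x + y$ one sets $F_1 = f(z) - f(x) - b$ and $G_2 = \frac{F_2 - F_1}{z - x} \in \Fq [x, z]$; the curves $\mathcal{V}_+ (F_1^\homog), \mathcal{V}_+ (G_2^\homog) \subset \ProjSp{2}{\overline{\Fq}}$ have no common component (as $F_1^\homog$ is absolutely irreducible of degree $d$ and $\degree{G_2^\homog} < d$), and B\'ezout's theorem gives that $\mathcal{V}_+ (F_1^\homog) \cap \mathcal{V}_+ (G_2^\homog)$ is a finite set of closed points whose intersection numbers sum to $d \cdot (d - 2)$, among which the points with $X_0 \neq 0$ are exactly the $1$-Boomerang solutions. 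Since $q$ is a power of $2$ and $\gcd \left( d, q \right) = 1$, the degree $d$ is odd; as in \Cref{Th: degree bound c = 1} we may assume $d \geq 3$.

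The new ingredient is to determine the restriction of $G_2^\homog$ to the line at infinity. Writing $g (w) = f (w + a) - f (w)$, its leading term is $d \cdot a_d \cdot a \cdot w^{d - 1}$, which is non-zero because $\gcd \left( d, q \right) = 1$; hence $\degree{g} = d - 1$ and
\[
    G_2 = \frac{g (z) - g (x)}{z - x} = d \cdot a_d \cdot a \cdot \bigl( z^{d - 2} + z^{d - 3} x + \cdots + x^{d - 2} \bigr) + \bigl( \text{terms of degree} < d - 2 \bigr),
\]
so that $\degree{G_2} = d - 2$ exactly and $G_2^\homog (0, X, Z) = d \cdot a_d \cdot a \cdot \sum_{i = 0}^{d - 2} X^i Z^{d - 2 - i}$. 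I would then evaluate at $(X, Z) = (1, 1)$: the value is $d \cdot a_d \cdot a \cdot (d - 1)$, which vanishes in characteristic $2$ because $d - 1$ is even. Since also $F_1^\homog (0, 1, 1) = a_d \cdot (1^d - 1^d) = 0$, the point $(0 : 1 : 1) \in \ProjSp{2}{\overline{\Fq}}$ lies in $\mathcal{V}_+ (F_1^\homog) \cap \mathcal{V}_+ (G_2^\homog)$.

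To finish, $(0 : 1 : 1)$ is a closed point of the intersection with $X_0 = 0$, hence not a $1$-Boomerang solution, so the number of $1$-Boomerang solutions is at most $\bigl( \text{number of closed points of the intersection} \bigr) - 1 \leq \bigl( \sum_z i_z \bigr) - 1 = d \cdot (d - 2) - 1$, which yields ${}_1 \mathcal{B}_f (a, b) \leq d \cdot (d - 2) - 1$. The one place where characteristic $2$ genuinely enters — and the step I would regard as the crux — is the vanishing of $G_2^\homog$ at $(0 : 1 : 1)$: its highest-degree form is a sum of $d - 1$ monomials sharing one non-zero coefficient, so it vanishes at $(1, 1)$ precisely when the characteristic divides $d - 1$, which is automatic here since $d$ is odd. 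The only routine point to double-check is that $\degree{G_2} = d - 2$ exactly, so that this homogeneous form really is the restriction of $G_2^\homog$ to the line at infinity; this is exactly where $\gcd \left( d, q \right) = 1$ is used.
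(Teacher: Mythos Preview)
Your argument is correct, but it is a genuinely different route from the paper's. The paper's proof is a two-line parity argument: since $\gcd(d,q)=1$ and $q$ is even, $d$ is odd, so $d\cdot(d-2)$ is odd; on the other hand, in characteristic $2$ the $1$-Boomerang equations are invariant under the involution $(x,y)\mapsto(x+y,y)$, which has no fixed points because $y=0$ would force $b=0$. Hence the solution count is even and cannot equal $d\cdot(d-2)$, so it is at most $d\cdot(d-2)-1$. Your approach instead stays inside the B\'ezout framework and exhibits an explicit point $(0:1:1)$ of $\mathcal{V}_+(F_1^\homog)\cap\mathcal{V}_+(G_2^\homog)$ on the line at infinity, which subtracts one from the affine count. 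The paper's argument is shorter and uses nothing beyond \Cref{Th: degree bound c = 1}; yours is more geometric and identifies \emph{where} the missing point sits, which could in principle be pushed further (e.g.\ by computing the local intersection multiplicity at $(0:1:1)$, or by looking for further points at infinity) to sharpen the bound beyond what parity alone gives.
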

    \begin{proof}
        By \Cref{Th: degree bound c = 1} we have that ${}_1 \mathcal{B}_f (a, b) \leq d \cdot (d - 2)$.
        Since $\gcd \left( d, q \right) = 1$ we have that $d \cdot (d - 2)$ is odd.
        On the other hand, in characteristic $2$ the boomerang equations, see \Cref{Def: boomerang uniformity}, are symmetric.
        Thus, the number of solutions must be even, and the bound from \Cref{Th: degree bound c = 1} is not attainable.
    \end{proof}

    With \Cref{Prop: irreducible polynomial} we can also prove a special case for the $-1$-Boomerang uniformity.
    \begin{cor}\label[cor]{Cor: degree bound odd function c = -1}
        Let $\Fq$ be a finite field of odd characteristic, and let $f \in \Fq [X]$ with $d = \degree{f} > 0$ be a polynomial such that
        \begin{enumerate}[label=(\roman*)]
            \item $\gcd \left( d, q \right) = 1$, and

            \item $f (-x) = -f (x)$.
        \end{enumerate}
        Let $a, b \in \Fqx$.
        Then
        \begin{enumerate}
            \item $f (x + y) + f (x) + b$ is absolutely irreducible.

            \item ${}_{-1} \mathcal{B}_f (a, b) \leq d \cdot (d - 1)$.
        \end{enumerate}
    \end{cor}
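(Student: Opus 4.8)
The plan is to derive part (1) from \Cref{Prop: irreducible polynomial} by an invertible linear change of variables, and then to run the B\'ezout argument of \Cref{Th: degree bound c = 1} on the $-1$-Boomerang polynomials; in the present case this is in fact slightly simpler, because no linear factor will need to be divided out.

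For part (1), I would use the hypothesis $f(-x) = -f(x)$, equivalently $f(x) = -f(-x)$, to rewrite $f(x + y) + f(x) + b = f(x + y) - f(-x) + b$. The substitution $(x, y) \mapsto (u, w) := (x + y, -x)$ is linear with determinant $1$, hence invertible over $\Fq$, and in the new coordinates the polynomial becomes $f(u) - f(w) + b$. Since $b \in \Fqx$ and $\gcd(d, p) = 1$ --- because $p \mid q$ and $\gcd(d, q) = 1$ --- \Cref{Prop: irreducible polynomial} shows that $f(u) - f(w) + b$ is absolutely irreducible over $\Fq$, and by \Cref{Lem: change of variables irreducible} so is $f(x + y) + f(x) + b$.

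For part (2), I would set $z = x + y$ and consider the two $-1$-Boomerang polynomials from \Cref{Def: boomerang uniformity},
\[
F_1 = f(z) + f(x) - b, \qquad F_2 = f(z + a) + f(x + a) - b \in \Fq[x, z].
\]
Part (1), applied with $-b$ in place of $b$, gives that $F_1$ is absolutely irreducible of degree $d$. Writing $g(t) = f(t + a) - f(t)$, we have $F_2 - F_1 = g(z) + g(x)$; expanding $f = \sum_{i = 0}^{d} a_i t^i$ one checks that the coefficient of $t^{d - 1}$ in $g$ is $a_d \cdot d \cdot a$, which is non-zero because $a_d \neq 0$, $\gcd(d, q) = 1$, and $a \in \Fqx$, so $\degree{F_2 - F_1} = d - 1 < d$. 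Homogenising with respect to a fresh variable $X_0$, the plane curve $\mathcal{V}_+ \big( F_1^\homog \big)$ is irreducible of degree $d$, whereas $\mathcal{V}_+ \big( (F_2 - F_1)^\homog \big)$ has degree $d - 1$, so the two curves have no common component. By B\'ezout's theorem (\Cref{Th: Bezout}) their intersection number equals $d \cdot (d - 1)$, and then, exactly as in the proof of \Cref{Th: degree bound c = 1} --- passing to $\overline{\Fq}$-valued points, which agree with the closed points, and restricting to the affine chart $X_0 \neq 0$ --- the number of affine $\overline{\Fq}$-solutions of $F_1 = F_2 - F_1 = 0$, equivalently of $F_1 = F_2 = 0$, is at most $d(d - 1)$. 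Since $(x, y) \mapsto (x, x + y)$ maps the pairs counted by ${}_{-1} \mathcal{B}_f(a, b)$ injectively into the $\Fq$-solutions of $F_1 = F_2 = 0$, this yields ${}_{-1} \mathcal{B}_f(a, b) \leq d (d - 1)$.

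I expect the only delicate point to be the degree count $\degree{F_2 - F_1} = d - 1$, on which the ``no common component'' step --- and hence the whole bound --- rests; this is precisely where the hypotheses $\gcd(d, q) = 1$ and $a \neq 0$ enter part (2). Conceptually the crux is the observation in part (1) that the odd-function hypothesis is exactly what brings $f(x + y) + f(x) + b$ into the shape covered by \Cref{Prop: irreducible polynomial}; in contrast to the case $c = 1$, here there is no symmetric solution to remove, which is why the bound is $d(d - 1)$ and not $d(d - 2)$.
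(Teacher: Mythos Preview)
Your proposal is correct and follows essentially the same approach as the paper: reduce part~(1) to \Cref{Prop: irreducible polynomial} via the odd-function hypothesis and an invertible linear change of variables, then for part~(2) pass to $z = x + y$, observe $\deg(F_2 - F_1) \leq d - 1 < d = \deg F_1$, and invoke B\'ezout exactly as in \Cref{Th: degree bound c = 1}. The paper phrases the change of variables as the two-step substitution $z = x + y$ followed by $x = -\hat{x}$, whereas you do it in one step via $(u,w) = (x+y, -x)$; these are equivalent, and your handling of the $\pm b$ sign and of the leading coefficient of $F_2 - F_1$ is in fact slightly more explicit than the paper's.
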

    \begin{proof}
        We consider the $-1$-Boomerang equations with the substitution $z = x + y$
        \begin{align*}
            F_1 &= f (z) + f (x) - b, \\
            F_2 &= f (z + a) + f (x + a) - b.
        \end{align*}
        Next we perform the substitution $x = -\hat{x}$, then
        \begin{equation*}
            F_1 = f (z) - f (\hat{x}) - b,
        \end{equation*}
        and by \Cref{Lem: change of variables irreducible} and \Cref{Prop: irreducible polynomial} this polynomial is absolutely irreducible.

        Since $\degree{F_2 - F_1} \leq (d - 1) < d = \degree{F_1}$, these polynomials cannot share an irreducible component after homogenization.
        Now we can apply B\'ezout's theorem analog to \Cref{Th: degree bound c = 1} to yield the bound.
    \end{proof}

    \section{Arithmetic of \texorpdfstring{$c$}{c}-Boomerang Polynomial Systems}\label{Sec: Arithmetic of Boomerang Polynomial Systems}
    For $c = -1$ the $c$-Boomerang equations are symmetric, see \Cref{Def: boomerang uniformity}.
    In particular, their Newton polytopes are symmetric triangles.
    However, the GCD criterion for triangle indecomposability (\Cref{Cor: triangle indecomposable}) requires a non-symmetry in the triangle.\footnote{A bit more precise, the Newton polytope of $F = f (x) + f (y) + a$, where $d = \degree{f}$ and $f (0) = 0$, is given by $\big< \mathbf{0}, (d, 0)^\intercal, (0, d)^\intercal \big>$, and $\gcd \big( (d, 0)^\intercal, (0, d)^\intercal \big) = 1$.}
    Therefore, as an alternative approach to derive $c$-Boomerang uniformity bounds we study in this section the arithmetic of $c$-Boomerang polynomial systems.

    Recall that for $f \in k [x_1, \dots, x_n]$ we denote the highest homogeneous component of $f$ with $f^\topcomp$.
    Then for $\mathcal{F} \subset k [x_1, \dots, x_n]$ a finite system of polynomials, we denote $\mathcal{F}^\topcomp = \left\{ f^\topcomp \right\}_{f \in \mathcal{F}}$.
    Central to our arithmetic investigations are the following properties of the highest degree components ideal $\left( \mathcal{F}^\topcomp \right)$.
    \begin{lem}\label[lem]{Lem: vector space dimension}
        Let $k$ be a field, let $\mathcal{F} \subset P = k [x_1, \dots, x_n]$ be a finite system of polynomials, and let $d = \dim_k \big( P / (\mathcal{F}^\topcomp) \big)$.
        Then
        \begin{enumerate}
            \item $\big( \LM_{DRL} (f) \; \big\vert \; f \in (\mathcal{F}^\topcomp) \big) \subset \big( \LM_{DRL} (f) \; \big\vert \; f \in (\mathcal{F}) \big)$.

            \item $\dim_k \big( P / (\mathcal{F}) \big) \leq d$.
        \end{enumerate}
    \end{lem}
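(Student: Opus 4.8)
The plan is to exploit that the degree reverse lexicographic order is \emph{degree-compatible}: by construction of $>_{DRL}$ (it compares total degrees first), for any nonzero $p \in P$ the leading monomial $\LM_{DRL}(p)$ occurs in the highest homogeneous component $p^\topcomp$, whence $\LM_{DRL}(p) = \LM_{DRL}(p^\topcomp)$. With this observation both statements reduce to a comparison of (homogeneous) leading-term ideals.

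\emph{Part (1).} Each $f^\topcomp$ is homogeneous, so $(\mathcal{F}^\topcomp)$ is a homogeneous ideal; hence for any $g \in (\mathcal{F}^\topcomp)$ every homogeneous component of $g$ lies in $(\mathcal{F}^\topcomp)$ as well, and $\LM_{DRL}(g)$ is $\LM_{DRL}$ of its top homogeneous component. It therefore suffices to show $\LM_{DRL}(g) \in \big( \LM_{DRL}(f) \mid f \in (\mathcal{F}) \big)$ for every nonzero \emph{homogeneous} $g \in (\mathcal{F}^\topcomp)$, say of degree $e$. Write $g = \sum_i h_i \cdot f_i^\topcomp$ with $f_i \in \mathcal{F}$ and $h_i \in P$; extracting the degree-$e$ component of this identity, we may assume each $h_i$ is homogeneous of degree $e - \degree{f_i}$ and discard the indices with $\degree{f_i} > e$. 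Now put $\tilde{g} = \sum_i h_i \cdot f_i \in (\mathcal{F})$. Each summand $h_i f_i$ has degree $e$ with degree-$e$ part $h_i \cdot f_i^\topcomp$, so $\tilde{g}^\topcomp = \sum_i h_i f_i^\topcomp = g \neq 0$. Consequently $\LM_{DRL}(\tilde{g}) = \LM_{DRL}(\tilde{g}^\topcomp) = \LM_{DRL}(g)$, and since $\tilde{g} \in (\mathcal{F})$ this gives the asserted inclusion.

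\emph{Part (2).} Recall Macaulay's basis theorem: for any ideal $I \subset P$ and any term order, the residue classes of the monomials \emph{not} contained in the leading-term ideal $\big( \LM_{DRL}(f) \mid f \in I \big)$ form a $k$-vector-space basis of $P/I$ (see e.g.\ \cite{Cox-Ideals,Kreuzer-CompAlg1}). By part (1) the leading-term ideal of $(\mathcal{F}^\topcomp)$ is contained in that of $(\mathcal{F})$, so the set of monomials avoiding the leading-term ideal of $(\mathcal{F})$ is a subset of the corresponding set for $(\mathcal{F}^\topcomp)$. Comparing cardinalities of these two monomial bases gives $\dim_k \big( P / (\mathcal{F}) \big) \leq \dim_k \big( P / (\mathcal{F}^\topcomp) \big) = d$, the inequality being vacuous when $d = \infty$.

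\emph{Main obstacle.} The only delicate point is the passage from $g = \sum_i h_i f_i^\topcomp$ to $\tilde{g} = \sum_i h_i f_i$: a priori the top-degree terms of the products $h_i f_i$ could cancel, so that $\tilde{g}$ has degree strictly less than $e$ and its leading monomial bears no relation to $\LM_{DRL}(g)$. Normalizing the $h_i$ to be homogeneous of degree $e - \degree{f_i}$ is precisely what prevents this: afterwards the degree-$e$ component of $\tilde{g}$ is literally $g$, which is nonzero by assumption, so $\degree{\tilde{g}} = e$ and $\tilde{g}^\topcomp = g$. Apart from this bookkeeping and the invocation of Macaulay's basis theorem, the argument is exactly the degree-compatibility observation.
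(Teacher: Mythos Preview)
Your proof is correct and follows essentially the same approach as the paper: lift a homogeneous element $g = \sum h_i f_i^{\topcomp}$ with homogeneous coefficients $h_i$ to $\tilde{g} = \sum h_i f_i \in (\mathcal{F})$, observe $\tilde{g}^{\topcomp} = g$ so the DRL leading monomials agree, and then deduce (2) from (1) via the standard monomial basis for $P/I$. Your explicit reduction to homogeneous $g$ and your discussion of the cancellation obstacle make the argument a bit more careful than the paper's write-up, but the underlying idea is the same.
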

    \begin{proof}
        Let $h \in \left( \mathcal{F}^\topcomp \right)$ be non-zero, then we can express it as
        \begin{equation*}
            h = \sum_{f \in \mathcal{F}^\topcomp} g_f \cdot f^\topcomp,
        \end{equation*}
        where $g_f \in P$ is homogeneous and if it is non-zero then $\degree{g_f} = \degree{h} - \degree{f}$.
        We can lift $h$ to an element $\hat{h} \in (\mathcal{F})$ via
        \begin{equation*}
            \hat{h} = \sum_{f \in \mathcal{F}} g_f \cdot f,
        \end{equation*}
        and by homogeneity of the $g_f$'s we have that $\degree{\hat{h}} = \degree{h}$ and $\LM_{DRL} \left( \hat{h} \right) = \LM_{DRL} \left( h \right)$.
        This proves (1).

        For (2), if $d = \infty$ the inequality is trivial, otherwise we recall that one possible $k$-vector space basis of $P / (\mathcal{F})$ is given by all monomials not contained in $\big( \LM_{DRL} (f) \; \big\vert \; f \in (\mathcal{F}) \big)$ \cite[Chapter~5~\S 3~Proposition 4]{Cox-Ideals}.
        Via the inclusion from (1) we then have $\abs{P \setminus \big\{ \LM_{DRL} (f) \; \big\vert \; f \in (\mathcal{F}) \big\}} \leq \abs{P \setminus \big\{ \LM_{DRL} (f) \; \big\vert \; f \in (\mathcal{F}^\topcomp) \big\}} = d$.
    \end{proof}

    Thus, if we can construct $x_i^{d_i} \in \big( \LM_{DRL} (f) \; \big\vert \; f \in (\mathcal{F}^\topcomp) \big)$ for every variable $x_i$, then we have immediately established the following two properties:
    \begin{enumerate}
        \item Zero-dimensionality of $\left( \mathcal{F}^\topcomp \right)$ and $\left( \mathcal{F} \right)$, see \cite[Chapter~5~\S 3~Theorem~6]{Cox-Ideals} and \cite[Theorem 5.11]{Kemper-CommutativeAlgebra}.

        \item $\abs{\mathcal{V} (\mathcal{F}) \big( k \big)} = \dim_k \left( P / (\mathcal{F}) \right) \leq \dim_k \left( P / \left( \mathcal{F}^\topcomp \right) \right) \leq \prod_{i = 1}^{n} d_i$, see \cite[Chapter~5~\S 3~Proposition~7]{Cox-Ideals}.
    \end{enumerate}

    The highest degree components of the $c$-Boomerang polynomials are relatively simple bivariate polynomials, which allows construction of their DRL Gr\"obner bases by hand.

    \subsection{\texorpdfstring{$c^2 \neq 1$}{c\textasciicircum 2 != 1}}
    We start with the simplest case $c^2 \neq 1$, for which we can trivially compute a DRL Gr\"obner basis of the $c$-Boomerang polynomials.
    \begin{prop}\label[prop]{Prop: c-boomerang unfiformity}
        Let $\Fq$ be a finite field, let $f \in \Fq [X]$ with $d = \degree{f}$, let $c \in \Fqx$ be such that $c^2 \neq 1$, let $a \in \Fqx$ and $b \in \Fq$, and let $F_1, F_2 \in \Fq [x, z]$ be the $c$-Boomerang polynomials of $(a, b)$.
        Then
        \begin{enumerate}
            \item $\left\{ F_1, c^{-1} \cdot F_2 - F_1 \right\}$ is a DRL Gr\"obner basis of $(F_1, F_2)$ for $z >_{DRL} x$.

            \item ${}_c \mathcal{B}_f (a, b) \leq d^2$.
        \end{enumerate}
    \end{prop}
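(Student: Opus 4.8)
The plan is to read both claims off an explicit DRL Gr\"obner basis. Recall that, after the substitution $z = x + y$ (which is a bijection $\Fq^2 \to \Fq^2$, hence ${}_c \mathcal{B}_f (a, b) = \abs{\mathcal{V} (F_1, F_2) \big( \Fq \big)}$), the $c$-Boomerang polynomials are $F_1 = f (z) - c \cdot f (x) - b$ and $F_2 = c \cdot f (z + a) - f (x + a) - c \cdot b$ in $\Fq [x, z]$; write $f = \sum_{i = 0}^{d} a_i X^i$ with $a_d \neq 0$ and equip $\Fq [x, z]$ with the DRL order for $z >_{DRL} x$. Among the monomials of a fixed total degree in two variables $z^k$ is the $>_{DRL}$-largest, so $\LM_{DRL} (F_1) = \LM_{DRL} (F_2) = z^d$; in particular $\{ F_1, F_2 \}$ itself is visibly not a Gr\"obner basis, which is why one passes to the combination $h := c^{-1} \cdot F_2 - F_1$ that cancels the $z^d$-term.

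The key step is to locate $\LM_{DRL} (h)$, where $h = \big( f (z + a) - f (z) \big) + \big( c \cdot f (x) - c^{-1} \cdot f (x + a) \big)$. In the first summand the $z^d$-terms cancel, so $\degree{f (z + a) - f (z)} \leq d - 1$, while in the second summand the coefficient of $x^d$ is $(c - c^{-1}) \cdot a_d$, which is non-zero \emph{precisely because} $c^2 \neq 1$. Hence $h$ has the unique top-degree monomial $x^d$ and $\LM_{DRL} (h) = x^d$. This is exactly the point where the cases $c = \pm 1$ would break down (there $c - c^{-1} = 0$, so the $x^d$-term vanishes and $\{ F_1, h \}$ ceases to be a Gr\"obner basis), and it is the only genuinely delicate part of the argument. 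Since $(F_1, h) = (F_1, F_2)$ and $\gcd \big( \LM_{DRL} (F_1), \LM_{DRL} (h) \big) = \gcd (z^d, x^d) = 1$, the $S$-polynomial of $F_1$ and $h$ reduces to zero modulo $\{ F_1, h \}$ by the classical coprime-leading-monomial refinement of Buchberger's criterion \cite[Chapter~2~\S 9~Proposition~4]{Cox-Ideals}; by \Cref{Th: Buchberger criterion}, $\{ F_1, h \}$ is a DRL Gr\"obner basis of $(F_1, F_2)$, proving (1).

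For (2) it then suffices to count monomials. The DRL leading-term ideal of $(F_1, F_2)$ is $(z^d, x^d)$, which contains a pure power of each of $z$ and $x$, so $(F_1, F_2)$ is zero-dimensional; the monomials outside $(z^d, x^d)$ are exactly $\{ x^i z^j \mid 0 \leq i, j \leq d - 1 \}$, and these form an $\Fq$-basis of $\Fq [x, z] / (F_1, F_2)$ \cite[Chapter~5~\S 3~Proposition~4]{Cox-Ideals}, whence $\dim_{\Fq} \big( \Fq [x, z] / (F_1, F_2) \big) = d^2$. Consequently $\abs{\mathcal{V} (F_1, F_2) \big( \overline{\Fq} \big)} \leq d^2$ \cite[Chapter~5~\S 3~Proposition~7]{Cox-Ideals}, so a fortiori $\abs{\mathcal{V} (F_1, F_2) \big( \Fq \big)} \leq d^2$, and combining with the bijection from the start gives ${}_c \mathcal{B}_f (a, b) \leq d^2$. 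The only real obstacle is the leading-monomial computation for $h$; everything else is routine bookkeeping with the term order and the standard dimension--point count.
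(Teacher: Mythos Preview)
Your proof is correct and follows essentially the same approach as the paper: form $h = c^{-1} F_2 - F_1$, observe that its DRL leading monomial is $x^d$ because the $x^d$-coefficient $(c - c^{-1}) a_d$ is non-zero precisely when $c^2 \neq 1$, invoke the coprime-leading-monomial criterion to conclude $\{F_1, h\}$ is a DRL Gr\"obner basis, and read off the bound $d^2$ from the monomial count in $\Fq[x,z]/(z^d,x^d)$. Your write-up is in fact slightly more explicit than the paper's at several points (the bijection $z = x + y$, the location of $z^k$ in the DRL order, the passage from $\overline{\Fq}$-points to $\Fq$-points).
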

    \begin{proof}
        Starting from the $c$-Boomerang polynomials
        \begin{align*}
            F_1 &= f(z) - c \cdot f (x) - b, \\
            F_2 &= c \cdot f (z + a) - f (x + a) - b,
        \end{align*}
        we equip $\Fq [x, z]$ with the DRL term order $z >_{DRL} x$.
        Then $\LM_{DRL} \left( F_1 \right) = z^d$.
        Moreover,
        \begin{equation*}
            c^{-1} \cdot F_2 - F_1
            = \big( f (z - a) - f (z) \big) + c^{-1} \cdot \big( f (x + a) - c^2 \cdot f (x) \big) - b \cdot \left( 1 - c^{-1} \right).
        \end{equation*}
        Note that this polynomial does not have the monomial $z^d$, but since $c^2 \neq 1$ it has the monomial $x^d$.
        If a set of polynomials has pairwise coprime leading monomials, then it is already a Gr\"obner basis, see \cite[Chapter~2~\S 9~Theorem~3, Proposition~4]{Cox-Ideals}.
        Moreover, the number of monomials not contained in $(x^d, z^d) \in \Fq [x, z]$ is exactly $d^2$.
        By \cite[Chapter~5~\S 3~Proposition~7]{Cox-Ideals} this proves the claim.
    \end{proof}

    \subsection{\texorpdfstring{$c = - 1$}{c = -1}}
    Next we investigate the highest degree components for $c = -1$.
    \begin{prop}\label[prop]{Prop: highest degree components c = -1}
        Let $k$ be a field of characteristic $p \neq 2$, let $d \in \Z_{\geq 2}$, let $I = \big( x^d + y^d, x^{d - 1} + y^{d - 1} \big) \subset k [x, y]$, and let $x >_{DRL} y$.
        Then
        \begin{enumerate}
            \item $I$ has the DRL Gr\"obner basis
            \begin{equation*}
                \mathcal{G} = \{ g_1, g_2, g_3 \} = \left\{ x^{d - 1} + y^{d - 1} , x \cdot y^{d - 1} - y^d, y^{2 \cdot d - 2} \right\}.
            \end{equation*}

            \item $\dim_k \big( k [x, y] / I \big) = d \cdot (d - 1)$.
        \end{enumerate}
    \end{prop}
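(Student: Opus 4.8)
The plan is to verify by Buchberger's criterion (\Cref{Th: Buchberger criterion}) that the three asserted polynomials form a DRL Gröbner basis of $I$, and then to count standard monomials to obtain the dimension. Write $g_1 = x^{d-1} + y^{d-1}$, $g_2 = x y^{d-1} - y^d$, $g_3 = y^{2d-2}$. In each of $g_1$ and $g_2$ the two monomials have the same total degree, so the DRL tie-break with $x >_{DRL} y$ selects the monomial of larger $x$-degree, giving $\LM_{DRL}(g_1) = x^{d-1}$, $\LM_{DRL}(g_2) = x y^{d-1}$, $\LM_{DRL}(g_3) = y^{2d-2}$. First I would establish $(\mathcal{G}) = I$ for $\mathcal{G} = \{g_1,g_2,g_3\}$: the element $g_1$ is a generator of $I$, $g_2 = x\cdot g_1 - (x^d + y^d) \in I$, and (see below) $g_3$ arises as a $k[x,y]$-combination of $g_1, g_2$; conversely $x^d + y^d = x\cdot g_1 - g_2$, so both given generators of $I$ lie in $(\mathcal{G})$.

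Next I would run Buchberger's criterion on $\{g_1,g_2,g_3\}$. The pair $(g_1,g_3)$ is immediate, since $\LM_{DRL}(g_1) = x^{d-1}$ and $\LM_{DRL}(g_3) = y^{2d-2}$ are coprime \cite[Chapter~2~\S 9~Theorem~3, Proposition~4]{Cox-Ideals}. For $(g_2,g_3)$ one computes $S_{DRL}(g_2,g_3) = y^{d-1} g_2 - x g_3 = -y^{2d-1}$, which reduces to $0$ in one step against $g_3$. The only substantive pair is $(g_1,g_2)$, where $S_{DRL}(g_1,g_2) = y^{d-1} g_1 - x^{d-2} g_2 = y^{2d-2} + x^{d-2} y^d$; dividing the term $x^{d-2} y^d$ repeatedly by $g_2$ — the successive partial quotients being $x^{d-3}y, x^{d-4}y^2, \dots, y^{d-2}$ — telescopes to remainder $y^{2d-2}$, so $S_{DRL}(g_1,g_2)$ reduces to $2 y^{2d-2} = 2 g_3$, hence to $0$. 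Here the hypothesis $p \neq 2$ is used: it makes $2 y^{2d-2}$ a nonzero scalar multiple of $g_3$, which both closes this reduction and shows $g_3 \in I$, completing the identity $(\mathcal{G}) = I$. All S-polynomials reducing to $0$, $\mathcal{G}$ is a DRL Gröbner basis of $I$; this is (1).

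For (2), a $k$-basis of $k[x,y]/I$ is given by the monomials $x^a y^b$ divisible by no leading monomial of $\mathcal{G}$ \cite[Chapter~5~\S 3~Proposition~4]{Cox-Ideals}, i.e.\ those with $a \leq d-2$ and $b \leq 2d-3$ that are moreover not divisible by $x y^{d-1}$. For $a = 0$ every $b \in \{0,\dots,2d-3\}$ is admissible, contributing $2d-2$ monomials; for each $a \in \{1,\dots,d-2\}$ non-divisibility by $x y^{d-1}$ forces $b \leq d-2$, contributing $(d-2)(d-1)$ further monomials. The total is $(2d-2) + (d-2)(d-1) = d(d-1)$, and it is finite since pure powers of both $x$ and $y$ occur among the leading monomials; hence $\dim_k\big(k[x,y]/I\big) = d(d-1)$.

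I expect the only delicate point to be the telescoping reduction of $S_{DRL}(g_1,g_2)$, namely that division of $x^{d-2}y^d$ by $g_2 = x y^{d-1} - y^d$ leaves exactly $y^{2d-2}$ (with the edge case $d = 2$, where $x^{d-2}y^d = y^{2d-2}$ already, subsumed by the same bookkeeping); everything else — the other two S-pairs and the monomial count — is routine.
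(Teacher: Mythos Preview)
Your proposal is correct and follows essentially the same route as the paper: you verify $(\mathcal{G}) = I$, check Buchberger's criterion on the three S-pairs (with the only nontrivial reduction being the telescoping division of $S_{DRL}(g_1,g_2)$ by $g_2$ to reach $2y^{2d-2}$, where $p \neq 2$ is used), and then count standard monomials. The paper organizes the telescoping computation and the monomial count slightly differently, but the argument is the same.
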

    \begin{proof}
        For (1), first we show that indeed $\mathcal{G} \subset I$.
        We have that
        \begin{equation*}
            g_2 = S_{DRL} \left( x^{d - 1} + y^{d - 1}, x^d + y^d \right) = x \cdot y^{d - 1} - y^d.
        \end{equation*}
        We claim that
        \begin{equation}\label{Equ: remainder claim c = -1}
            y^{d - 1} \cdot \left( x^{d - 1} + y^{d - 1} \right) \equiv 2 \cdot y^{2 \cdot d - 2} \mod \left( x \cdot y^{d - 1} - y^d \right).
        \end{equation}
        To prove this assertion we perform inductive division by remainder with respect to DRL on the terms on the left-hand side \Cref{Equ: remainder claim c = -1}.
        In the first step of the division algorithm we reduce
        \begin{equation*}
            x^{d - 1} \cdot y^{d - 1}
            = x^{d - 2} \cdot \left( x \cdot y^{d - 1} \right)
            \equiv x^{d - 2} \cdot y^d \mod \left( x \cdot y^{d - 1} - y^d \right).
        \end{equation*}
        Next we have to reduce
        \begin{equation*}
            x^{d - 2} \cdot y^d
            = x^{d - 3} \cdot y \cdot \left( x \cdot y^{d - 1} \right)
            \equiv x^{d - 3} \cdot y^{d + 1}
            \mod \left( x \cdot y^{d - 1} - y^d \right).
        \end{equation*}
        Per induction, in the $i$\textsuperscript{th} iteration we have to reduce
        \begin{equation*}
            x^{d - i} \cdot y^{d + i - 2}
            = x^{d - i - 1} \cdot y^{i - 1} \cdot \left( x \cdot y^{d - 1} \right)
            \equiv x^{d - i - 1} \cdot y^{d + i - 1} \mod \left( x \cdot y^{d - 1} - y^d \right).
        \end{equation*}
        This procedure will eventually terminate in $i = d - 1$ where we arrive at the term $y^{2 \cdot d - 2}$.
        Since $y^{2 \cdot d - 2}$ is also present in the left-hand side of \Cref{Equ: remainder claim c = -1} we obtain the final remainder $2 \cdot y^{2 \cdot d - 1} \neq 0$, since we work in characteristic $p \neq 2$.
        Note that this division process can also be expressed as
        \begin{equation*}
            y^{d - 1} \cdot \left( x^{d - 1} + y^{d - 1} \right) - \left( x \cdot y^{d - 1} - y^d \right) \cdot \left( \sum_{i = 0}^{d - 2} x^{d - 2 - i} \cdot y^i \right)
            = 2 \cdot y^{2 \cdot d - 2}.
        \end{equation*}
        It also immediately follows that $(\mathcal{G}) = I$.
        Next we verify Buchberger's criterion (\Cref{Th: Buchberger criterion}):
        \begin{itemize}
            \item $S_{DRL} \left( g_1, g_3 \right)$ can be ignored due to coprimality of the leading monomials, see \cite[Chapter~2~\S 9~Theorem~3, Proposition~4]{Cox-Ideals}.

            \item The remainder of $S_{DRL} \left( g_1, g_2 \right)$ with respect to $\mathcal{G}$ corresponds exactly to the polynomial division of \Cref{Equ: remainder claim c = -1}, i.e.\ $S_{DRL} (g_1, g_2) \equiv 2 \cdot g_3 \equiv 0 \mod (\mathcal{G})$.

            \item For the last pair
            \begin{equation*}
                S_{DRL} \left( g_2, g_3 \right)
                = y^{d - 1} \cdot \left( x \cdot y^{d - 1} - y^d \right) - x \cdot y^{2 \cdot d - 2}
                = -y^{2 \cdot d - 1}
                \equiv 0 \mod \left( y^{2 \cdot d - 2} \right).
            \end{equation*}
        \end{itemize}
        So, $\mathcal{G}$ is indeed the DRL Gr\"obner basis of $I$.

        For (2), by \cite[Chapter~5~\S 3~Proposition~7]{Cox-Ideals} we have to count the monomials not contained in $\left( x^{d - 1}, x \cdot y^{d - 1}, y^{2 \cdot d - 2} \right) \subset k [x, y]$.
        There are
        \begin{itemize}
            \item $d - 2$ monomials $x, \dots, x^{d - 2}$,

            \item $(d - 2)^2$ monomials $x^k \cdot y^l$, where $1 \leq k, l \leq d - 2$, and

            \item $2 \cdot (d - 1) - 1$ monomials $y, \dots, y^{2 \cdot d - 3}$.
        \end{itemize}
        So, incorporating the constant monomial $1$ we have the $k$-vector space dimension $1 + d - 2 + (d - 2)^2 + 2 \cdot (d - 1) - 1 = d \cdot (d - 1)$.
    \end{proof}

    The homogeneous ideal from \Cref{Prop: highest degree components c = -1} is exactly the highest degree components ideal for $c = -1$ after an obvious preprocessing step.
    \begin{thm}\label[thm]{Th: degree bound c = -1}
        Let $\Fq$ be a finite field of odd characteristic, let $f \in \Fq [x]$ with $d = \degree{f}$ be such that $\gcd \left( d, q \right) = 1$, and let $a \in \Fqx$ and $b \in \Fq$.
        Then ${}_{-1} \mathcal{B}_f (a, b) \leq d \cdot (d - 1)$.
    \end{thm}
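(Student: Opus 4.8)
The plan is to follow the strategy sketched just before the statement: express ${}_{-1}\mathcal{B}_f(a, b)$ as a count of $\Fq$-rational points of a bivariate ideal, and bound that count through the highest degree components via \Cref{Lem: vector space dimension} and \Cref{Prop: highest degree components c = -1}. First I would perform the substitution $z = x + y$, which is a bijection of $\Fq^2$; by \Cref{Def: boomerang uniformity} the quantity ${}_{-1}\mathcal{B}_f(a, b)$ then equals $\abs{\{ (x, z) \in \Fq^2 \mid F_1 (x, z) = F_2 (x, z) = 0 \}}$, where
\begin{align*}
F_1 &= f(z) + f(x) - b, \\
F_2 &= f(z + a) + f(x + a) - b
\end{align*}
are regarded as polynomials in $\Fq[x, z]$.

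The crucial point is that the system $\{F_1, F_2\}$ must be preprocessed before the highest-degree-component machinery becomes useful: writing $a_d \neq 0$ for the leading coefficient of $f$, both $F_1$ and $F_2$ have highest degree component $a_d (z^d + x^d)$, so $(\{F_1, F_2\}^\topcomp)$ is merely principal and \Cref{Lem: vector space dimension} is vacuous. I would instead take the equivalent generating set $\mathcal{F} = \{ F_1, F_2 - F_1 \}$, with
\[
F_2 - F_1 = \bigl( f(z + a) - f(z) \bigr) + \bigl( f(x + a) - f(x) \bigr).
\]
Since the $z^d$- and $x^d$-terms cancel, this polynomial has degree exactly $d - 1$ with highest degree component $d \cdot a_d \cdot a \cdot (z^{d-1} + x^{d-1})$; the hypotheses $\gcd(d, q) = 1$ (forcing $d \not\equiv 0$ modulo the characteristic) and $a \neq 0$ are precisely what guarantees that this leading coefficient is nonzero. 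Up to nonzero scalars we therefore have $(\mathcal{F}^\topcomp) = \bigl( z^d + x^d, z^{d-1} + x^{d-1} \bigr)$, and since the characteristic is odd, \Cref{Prop: highest degree components c = -1} (applied with $z >_{DRL} x$, i.e.\ with $z$ and $x$ playing the roles of $x$ and $y$) gives $\dim_{\Fq}\bigl( \Fq[x, z] / (\mathcal{F}^\topcomp) \bigr) = d \cdot (d - 1)$.

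The remainder is routine. By \Cref{Lem: vector space dimension}(2) and $(\mathcal{F}) = (F_1, F_2)$ we obtain $\dim_{\Fq}\bigl( \Fq[x, z] / (F_1, F_2) \bigr) \leq d \cdot (d - 1) < \infty$, so $(F_1, F_2)$ is zero-dimensional and, by \cite[Chapter~5~\S 3~Proposition~7]{Cox-Ideals}, it has at most $d \cdot (d - 1)$ many $\Fq$-rational points; together with the identity for ${}_{-1}\mathcal{B}_f(a, b)$ from the first step this is the claim. (When $d = 1$ the assertion is trivial, since then $F_2 - F_1 = 2 a_1 a$ is a nonzero constant and the system is inconsistent, matching $d \cdot (d - 1) = 0$.) The one step requiring care is the passage from $\{F_1, F_2\}$ to $\{F_1, F_2 - F_1\}$: one must both recognize that this is needed to break the symmetry of the top components and verify that the degree drops by exactly one, which is exactly where $\gcd(d, q) = 1$ enters; once $(\mathcal{F}^\topcomp)$ has been identified, the bound is an immediate consequence of \Cref{Prop: highest degree components c = -1}.
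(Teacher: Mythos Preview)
Your proposal is correct and follows essentially the same approach as the paper: replace $F_2$ by $G_2 = F_2 - F_1$ so that the highest degree components become $(z^d + x^d,\ z^{d-1} + x^{d-1})$, then invoke \Cref{Prop: highest degree components c = -1} together with \Cref{Lem: vector space dimension} to bound the number of solutions by $d\cdot(d-1)$. Your write-up is in fact slightly more explicit than the paper's (you spell out the role of \Cref{Lem: vector space dimension} and the trivial case $d = 1$), but the argument is the same.
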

    \begin{proof}
        Let $f = \sum_{i = 0}^{d} a_i \cdot x^i$.
        We consider the $-1$-Boomerang equations with the substitution $z = x + y$
        \begin{align*}
            F_1 &= f (z) + f (x) - b, \\
            F_2 &= f (z + a) + f (x + a) - b,
        \end{align*}
        as polynomials in $\Fq [x, z]$.
        Observe that
        \begin{equation*}
            \Delta_a f (x)
            = f (x + a) - f (x)
            = \sum_{i = 0}^{d} a_i \cdot \left( (x + a)^i - x^i \right)
            = a_d \cdot d \cdot a \cdot x^{d - 1} + \ldots,
        \end{equation*}
        and since by assumption $\gcd \left( d, q \right) = 1$ this is a polynomial of degree $d - 1$.
        Now let
        \begin{equation*}
            G_2 = F_2 - F_1 = \Delta_a f (z) + \Delta_a f (x).
        \end{equation*}
        Then $\left( F_1^\topcomp, G_2^\topcomp \right) = \left( z^d + x^d, z^{d - 1} + x^{d - 1} \right)$, and the claim follows from \Cref{Prop: highest degree components c = -1}.
    \end{proof}

    \subsection{\texorpdfstring{$c = 1$}{c = 1}}
    For completeness, we also reinvestigate the case $c = 1$.
%
    \begin{prop}\label[prop]{Prop: highest degree components c = 1}
        Let $k$ be a field of characteristic $p$, let $d \in \Z_{\geq 3}$, let $I = \Big( x^d - y^d, \allowbreak \sum_{i + j = d - 2} x^i \cdot y^j \Big) \subset k [x, y]$, and let $x >_{DRL} y$.
        If $p > 0$, then assume in addition that $\gcd \left( d - 1, p \right) = 1$.
        Then
        \begin{enumerate}
            \item $I$ has the DRL Gr\"obner basis
            \begin{equation*}
                \mathcal{G} = \{ g_1, g_2, g_3 \} = \left\{ \sum_{i + j = d - 2} x^i \cdot y^j, x \cdot y^{d - 1} - y^d, y^{2 \cdot d - 3} \right\}.
            \end{equation*}

            \item $\dim_k \big( k [x, y] / I \big) = d \cdot (d - 2)$.
        \end{enumerate}
    \end{prop}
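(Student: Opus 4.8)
The plan is to follow the proof of \Cref{Prop: highest degree components c = -1} step for step, adjusting the bookkeeping to the generators at hand. Write $g_1 = \sum_{i + j = d - 2} x^i y^j$, $g_2 = x y^{d - 1} - y^d$, $g_3 = y^{2 d - 3}$. Since $x >_{DRL} y$, among the degree-$(d-2)$ monomials of $g_1$ the one with the smallest $y$-exponent is DRL-largest, so $\LM_{DRL}(g_1) = x^{d-2}$; likewise $\LM_{DRL}(g_2) = x y^{d-1}$ and $\LM_{DRL}(g_3) = y^{2d-3}$. For part (1) I would first establish $\mathcal{G} \subset I$ and $(\mathcal{G}) = I$: $g_1$ is a generator; the $S$-polynomial of $g_1$ and $x^d - y^d$ is $x^2 g_1 - (x^d - y^d) = \sum_{k=2}^{d-1} x^k y^{d-k} + y^d$, and one further reduction by $g_1$ collapses it to $-g_2$, giving $g_2 = (x y - x^2) g_1 + (x^d - y^d) \in I$; conversely $x^d - y^d = g_2 - (x y - x^2) g_1 \in (g_1, g_2)$, so $I = (x^d - y^d, g_1) = (g_1, g_2)$ once $g_3 \in (g_1, g_2)$ is also shown.

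The heart of the argument is the reduction of $y^{d-1} g_1 = \sum_{i = 0}^{d-2} x^i y^{2d-3-i}$ modulo $g_2$. Each step replaces the current leading term $x^{d-2-j} y^{d-1+j}$, which carries accumulated coefficient $j+1$, by $x^{d-3-j} y^{d+j}$; this merges with the next surviving term of the sum and bumps the coefficient to $j+2$, so after $d-2$ such steps only $y^{2d-3}$ remains, with coefficient $d-1$. Concretely, this is the identity $y^{d-1} g_1 - g_2 \cdot \sum_{i=0}^{d-3} (i+1)\, x^{d-3-i} y^i = (d-1) y^{2d-3}$. This is the one and only place the hypothesis $\gcd(d-1,p) = 1$ is used: it makes $d-1$ a unit, whence $y^{2d-3} = g_3 \in I$ and $(\mathcal{G}) = I$. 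I expect this inductive reduction — in particular verifying that the accumulated coefficient really lands on $d-1$ and does not collapse in characteristic $p$ — to be the main obstacle; everything after it is routine.

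With $\mathcal{G} \subset I$ and $(\mathcal{G}) = I$ in hand, part (1) follows from Buchberger's criterion (\Cref{Th: Buchberger criterion}): the pair $(g_1, g_3)$ is discarded because $\gcd(x^{d-2}, y^{2d-3}) = 1$; the $S$-polynomial of $g_1$ and $g_2$ equals $y^{d-1} g_1 - x^{d-3} g_2$, which is precisely the first line of the reduction above and hence reduces to $(d-1) g_3 \equiv 0$; and the $S$-polynomial of $g_2$ and $g_3$ equals $y^{d-2} g_2 - x g_3 = -y^{2d-2} = -y \cdot g_3$, which reduces to $0$. Thus $\mathcal{G}$ is the DRL Gr\"obner basis of $I$.

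For part (2) I would count the standard monomials, i.e.\ the $x^a y^b$ divisible by none of $x^{d-2}$, $x y^{d-1}$, $y^{2d-3}$. For $a = 0$ the middle constraint is vacuous, leaving $b \in \{0, \dots, 2d-4\}$, i.e.\ $2d-3$ monomials; for each $a$ with $1 \le a \le d-3$ the constraint forces $b \le d-2$, giving $d-1$ monomials. Hence $\dim_k(k[x,y]/I) = (2d-3) + (d-3)(d-1) = d(d-2)$ by \cite[Chapter~5~\S 3~Proposition~7]{Cox-Ideals}.
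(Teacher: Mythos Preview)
Your proof is correct and follows essentially the same approach as the paper: derive $g_2 \in I$ from the generators, establish the key reduction $y^{d-1} g_1 \equiv (d-1)\,y^{2d-3} \bmod (g_2)$ by an inductive division (using $\gcd(d-1,p)=1$ for nonvanishing), verify Buchberger's criterion on the three pairs, and count standard monomials. The only cosmetic differences are that the paper obtains $g_2$ via the intermediate observation $(x-y)\,g_1 = x^{d-1}-y^{d-1}$ before taking an $S$-polynomial with $x^d - y^d$, whereas you compute the $S$-polynomial of $g_1$ and $x^d - y^d$ directly and reduce once more by $g_1$; and the paper organizes the monomial count by separating pure $x$-powers, pure $y$-powers, and mixed terms rather than by the value of $a$.
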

    \begin{proof}
        For (1), note that
        \begin{equation*}
            (x - y) \cdot \left( \sum_{i + j = d - 2} x^i \cdot y^j \right) = x^{d - 1} - y^{d - 1},
        \end{equation*}
        see for example the proof of \cite[Lemma~3.5]{Steiner-Boomerang}, and
        \begin{equation*}
            g_2 = S_{DRL} \left( x^d - y^d, x^{d - 1} - y^{d - 1} \right) = x \cdot y^{d - 1} - y^d.
        \end{equation*}
        We claim that
        \begin{equation}\label{Equ: remainder claim c = 1}
            y^{d - 1} \cdot \left( \sum_{i + j = d - 2} x^i \cdot y^j \right) \equiv (d - 1) \cdot y^{2 \cdot d - 2} \mod \left( x \cdot y^{d - 1} - y^d \right).
        \end{equation}
        To prove this assertion we perform inductive division by remainder with respect to DRL on the terms on the left-hand side \Cref{Equ: remainder claim c = 1}.
        In the first step of the division algorithm we reduce
        \begin{equation*}
            x^{d - 2} \cdot y^{d - 1}
            = x^{d - 3} \cdot \left( x \cdot y^{d - 1} \right)
            \equiv x^{d - 3} \cdot y^d \mod \left( x \cdot y^{d - 1} - y^d \right).
        \end{equation*}
        Since $y^{d - 1} \cdot \left( x^{d - 3} \cdot y \right) = x^{d - 3} \cdot y^d$ is also present in the left-hand side of \Cref{Equ: remainder claim c = 1}, in the second step we have to reduce
        \begin{equation*}
            2 \cdot x^{d - 3} \cdot y^d
            = 2 \cdot x^{d - 4} \cdot y \cdot \left( x \cdot y^{d - 1} \right)
            \equiv 2 \cdot x^{d - 4} \cdot y^{d + 1}
            \mod \left( x \cdot y^{d - 1} - y^d \right).
        \end{equation*}
        Per induction, in the $i$\textsuperscript{th} iteration we have to reduce
        \begin{equation*}
            i \cdot x^{d - i - 1} \cdot y^{d + i - 2}
            = i \cdot x^{d - i - 2} \cdot y^{i - 1} \cdot \left( x \cdot y^{d - 1} \right)
            \equiv i \cdot x^{d - i - 2} \cdot y^{d + i - 1} \mod \left( x \cdot y^{d - 1} - y^d \right).
        \end{equation*}
        This procedure will eventually terminate in $i = d - 2$ where we arrive at the term $(d - 2) \cdot y^{2 \cdot d - 3}$.
        Since $y^{2 \cdot d - 3}$ is also present in the left-hand side of \Cref{Equ: remainder claim c = 1} we obtain the final remainder $(d - 1) \cdot y^{2 \cdot d - 3}$.
        If $p > 0$, then by assumption $d - 1 \not\equiv 0 \mod p$, so the final remainder is indeed non-zero which implies $\mathcal{G} \subset I$.
        Note that this division process can also be expressed as
        \begin{equation*}
            y^{d - 1} \cdot \left( \sum_{i + j = d - 2} x^i \cdot y^j \right) - \left( x \cdot y^{d - 1} - y^d \right) \cdot \left( \sum_{i = 0}^{d - 3} (i + 1) \cdot x^{d - 3 - i} \cdot y^i \right)
            = (d - 1) \cdot y^{2 \cdot d - 3}.
        \end{equation*}
        It also immediately follows that $(\mathcal{G}) = I$.
        Next we verify Buchberger's criterion (\Cref{Th: Buchberger criterion}):
        \begin{itemize}
            \item $S_{DRL} \left( g_1, g_3 \right)$ can be ignored due to coprimality of the leading monomials, see \cite[Chapter~2~\S 9~Theorem~3, Proposition~4]{Cox-Ideals}.

            \item $S_{DRL} (g_1, g_2)$ corresponds exactly to the first step in the polynomial division of \Cref{Equ: remainder claim c = 1}, therefore $S_{DRL} (g_1, g_2) \allowbreak \equiv (d - 1) \cdot g_3 \equiv 0 \mod \left( \mathcal{G} \right)$.

            \item  For the last pair
            \begin{equation*}
                S_{DRL} \left( g_2, g_3 \right)
                = y^{d - 2} \cdot \left( x \cdot y^{d - 1} - y^d \right) - x \cdot y^{2 \cdot d - 3}
                = -y^{2 \cdot d - 2}
                \equiv 0 \mod \left( y^{2 \cdot d - 3} \right).
            \end{equation*}
        \end{itemize}
        So, $\mathcal{G}$ is indeed the DRL Gr\"obner basis of $I$.

        For (2), by \cite[Chapter~5~\S 3~Proposition~7]{Cox-Ideals} we have to count the monomials not contained in $\left( x^{d - 2}, x \cdot y^{d - 1}, y^{2 \cdot d - 3} \right) \subset k [x, y]$.
        There are
        \begin{itemize}
            \item $d - 3$ monomials $x, \dots, x^{d - 3}$,

            \item $(d - 3) \cdot (d - 2)$ monomials $x^k \cdot y^l$, where $1 \leq k \leq d - 3$ and $1 \leq l \leq d - 2$, and

            \item $2 \cdot d - 4$ monomials $y, \dots, y^{2 \cdot d - 4}$.
        \end{itemize}
        So, incorporating the constant monomial $1$ we have the $k$-vector space dimension $1 + d - 3 + (d - 3) \cdot (d - 2) + 2 \cdot d - 4 = d \cdot (d - 2)$.
    \end{proof}

    If we apply \Cref{Prop: highest degree components c = 1} to the $1$-Boomerang polynomials we obtain a slightly more restricted bound than \Cref{Th: degree bound c = 1}.
    \begin{cor}\label[cor]{Cor: degree bound c = 1}
        Let $\Fq$ be a finite field, and let $f \in \Fq [x]$ with $d = \degree{f}$ be such that $\gcd \left( d, q \right) = \gcd \left( d - 1, q \right) = 1$, and let $a \in \Fqx$ and $b \in \Fq$.
        Then ${}_1 \mathcal{B}_f (a, b) \leq d \cdot (d - 2)$.
    \end{cor}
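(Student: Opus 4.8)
The plan is to run the argument of \Cref{Th: degree bound c = -1} with the symmetric ideal of \Cref{Prop: highest degree components c = -1} replaced by the ideal of \Cref{Prop: highest degree components c = 1}; as in \Cref{Th: degree bound c = 1}, I would carry out the argument for $b \in \Fqx$, which is the range relevant for the Boomerang uniformity and the only case in which the diagonal $z = x$ can be discarded. Writing $f = \sum_{i = 0}^{d} a_i \cdot x^i$ with $a_d \neq 0$ and introducing $z = x + y$, the $1$-Boomerang polynomials are $F_1 = f(z) - f(x) - b$ and $F_2 = f(z + a) - f(x + a) - b$, which I would view in $\Fq[x, z]$ with the DRL order $z >_{DRL} x$. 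Since $\gcd(d, q) = 1$, the forward difference $\Delta_a f(x) = f(x + a) - f(x) = d \cdot a_d \cdot a \cdot x^{d - 1} + \ldots$ has degree exactly $d - 1$, and $F_2 - F_1 = \Delta_a f(z) - \Delta_a f(x)$.

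First I would split off the diagonal: $F_2 - F_1$ vanishes on $z = x$, so $(z - x) \mid (F_2 - F_1)$ in $\Fq[x, z]$ (as in \cite[Lemma~3.5]{Steiner-Boomerang}); set $G_2 = (F_2 - F_1)/(z - x)$, of degree $d - 2$. Substituting $z = x$ into $F_1$ yields $-b \neq 0$, so the component $\{ z = x \}$ of $\mathcal{V}(F_1, F_2) = \mathcal{V}\big( F_1, (z - x) \cdot G_2 \big)$ carries no points, whence $\mathcal{V}(F_1, F_2)(\Fq) = \mathcal{V}(F_1, G_2)(\Fq)$ and it suffices to bound the latter. Next I would identify the highest degree components: $F_1^\topcomp = a_d \cdot (z^d - x^d)$ and $(F_2 - F_1)^\topcomp = d \cdot a_d \cdot a \cdot (z^{d - 1} - x^{d - 1})$, and since $z - x$ is homogeneous of degree $1$ this gives $G_2^\topcomp = d \cdot a_d \cdot a \cdot \sum_{i + j = d - 2} z^i x^j$. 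Clearing the nonzero scalars (which is where $\gcd(d, q) = 1$ really enters, ensuring $d \cdot a_d \cdot a \neq 0$) one obtains $\big( F_1^\topcomp, G_2^\topcomp \big) = \big( z^d - x^d, \sum_{i + j = d - 2} z^i x^j \big)$, which is precisely the ideal $I$ of \Cref{Prop: highest degree components c = 1} after the relabelling $(x, y) \mapsto (z, x)$; the hypothesis $\gcd(d - 1, q) = 1$ supplies the $\gcd(d - 1, p) = 1$ needed there.

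Then I would simply invoke the machinery: \Cref{Prop: highest degree components c = 1} (for $d \geq 3$; the cases $d \leq 2$ are immediate) gives $\dim_{\Fq}\big( \Fq[x, z] / (F_1^\topcomp, G_2^\topcomp) \big) = d \cdot (d - 2)$ together with a DRL Gr\"obner basis whose leading monomials include the pure powers $z^{d - 2}$ and $x^{2 \cdot d - 3}$. By \Cref{Lem: vector space dimension} applied to $\mathcal{F} = \{ F_1, G_2 \}$, the ideal $(F_1, G_2)$ is therefore also zero-dimensional and $\dim_{\Fq}\big( \Fq[x, z] / (F_1, G_2) \big) \leq d \cdot (d - 2)$, so $\big| \mathcal{V}(F_1, G_2)(\Fq) \big| \leq d \cdot (d - 2)$ by \cite[Chapter~5~\S 3~Proposition~7]{Cox-Ideals}, and hence ${}_1 \mathcal{B}_f(a, b) \leq d \cdot (d - 2)$.

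The one place requiring care — and the main obstacle I anticipate — is the bookkeeping around $G_2$: one must check both that passing from $F_2 - F_1$ to $G_2$ does not drop the degree of the top form (this fails without $\gcd(d, q) = 1$) and that division by the linear homogeneous form $z - x$ commutes with taking highest degree components, so that $G_2^\topcomp$ is exactly $(F_2 - F_1)^\topcomp / (z - x)$. Once these are nailed down, the statement follows formally from the already-established \Cref{Prop: highest degree components c = 1} and \Cref{Lem: vector space dimension}, exactly as \Cref{Th: degree bound c = -1} follows from \Cref{Prop: highest degree components c = -1}.
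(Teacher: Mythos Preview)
Your proposal is correct and follows essentially the same route as the paper: reduce $F_2 - F_1$ to $G_2 = (F_2 - F_1)/(z - x)$, identify $(F_1^\topcomp, G_2^\topcomp) = \big(z^d - x^d, \sum_{i+j=d-2} z^i x^j\big)$, and invoke \Cref{Prop: highest degree components c = 1} together with \Cref{Lem: vector space dimension}. Your explicit restriction to $b \in \Fqx$ and your remark that division by the homogeneous linear form $z - x$ commutes with taking top components are exactly the bookkeeping points the paper glosses over by pointing to the proof of \Cref{Th: degree bound c = 1} and \cite[Lemma~3.5]{Steiner-Boomerang}.
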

    \begin{proof}
        Let $f = \sum_{i = 0}^{d} a_i \cdot x^i$.
        We consider the $-1$-Boomerang equations with the substitution $z = x + y$
        \begin{align*}
            F_1 &= f (z) - f (x) - b, \\
            F_2 &= f (z + a) - f (x + a) - b,
        \end{align*}
        as polynomials in $\Fq [x, z]$.
        Observe that
        \begin{equation*}
            \Delta_a f (x)
            = f (x + a) - f (x)
            = \sum_{i = 0}^{d} a_i \cdot \left( (x + a)^i - x^i \right)
            = a_d \cdot d \cdot a \cdot x^{d - 1} + \ldots,
        \end{equation*}
        and since by assumption $\gcd \left( d, q \right) = 1$ this is a polynomial of degree $d - 1$.
        Now let
        \begin{equation*}
            \tilde{F}_2 = F_2 - F_1 = \Delta_a f (z) - \Delta_a f (x).
        \end{equation*}
        As discussed in the proof of \Cref{Th: degree bound c = 1}, $(z - x) \mid \tilde{F}_2$ and we can replace it by $G_2 = \frac{\tilde{F}_2}{z - x}$, which has degree $d - 2$.
        In particular, $(F_1^\topcomp, G_2^\topcomp) = \left( z^d - x^d, \sum_{i + j = d - 2} z^{i} \cdot x^{j} \right)$, this again follows from the proof of \cite[Lemma~3.5]{Steiner-Boomerang}, and we obtain the inequality via \Cref{Prop: highest degree components c = 1}.
    \end{proof}

    On the other hand, the improved bound in characteristic $2$ from \Cref{Cor: binary degree bound c = 1} is not covered by \Cref{Prop: highest degree components c = 1} respectively \Cref{Cor: degree bound c = 1}.

    \section{Constructing Tight Examples}\label{Sec: Constructing Tight Examples}
    With the aid of \SageMath \cite{SageMath} we now construct tight examples for \Cref{Th: degree bound c = 1}, \Cref{Cor: binary degree bound c = 1}, \Cref{Prop: c-boomerang unfiformity} and \Cref{Th: degree bound c = -1}.
    For $f \in \Fq [X]$ and $c \in \Fqx$, our tight examples are found via the following procedure:
    \begin{enumerate}
        \item Pick $a, b \in \Fqx$ and set up the $c$-Boomerang polynomials $F_1, F_2 \in \Fq [x, z]$.

        \item Compute the DRL Gr\"obner basis of $(F_1, F_2)$ for $z >_{DRL} x$.

        Compute $D = \dim_{\Fq} \big( \Fq [x, z] / (F_1, F_2) \big)$.
        If $D$ is less than the expected bound, pick different $(a, b)$.

        \item Compute the LEX Gr\"obner basis for $z >_{LEX} x$ via term order conversion \cite{Faugere-FGLM}.

        If the LEX basis is not of the form $\big\{ z - g_1 (x), g_2 (x) \big\}$, pick different $(a, b)$.

        \item Factor $g_2 (x)$.

        If every factor has multiplicity we pass to an appropriate field extension $\Fq \subset \F_{q^n}$ such that $g_2$ decomposes into distinct linear factors.
        Otherwise, pick different $(a, b)$.
    \end{enumerate}

    In \Cref{Alg: polynomial generation} we provide the \SageMath function for generation of the $c$-Boomerang polynomials.
    \begin{algorithm}[H]
        \centering
        \caption{Construction of $c$-Boomerang Polynomials.}
        \label{Alg: polynomial generation}
        \lstinputlisting[language=MyPython]{./Code/c_boomerang_polynomials.sage}
    \end{algorithm}

    \subsection{A Short Excursion on Dickson Polynomials}
    Dickson polynomials have numerous applications in the study of finite fields.
    In particular, they provide a generic way to instantiate permutation polynomials.
    All our tight examples in this section are instantiations of Dickson polynomials.
    \begin{defn}[{\cite{Lidl-Dickson}}]
        Let $\Fq$ be a finite field, let $n \in \Z_{\geq 0}$, and let $a \in \Fq$.
        The $n$\textsuperscript{th} Dickson polynomial (of the first kind) is defined as
        \begin{equation*}
            D_n (X, a) = \sum_{i = 0}^{\floor{\frac{n}{2}}} \frac{n}{n - i} \cdot \binom{n - i}{i} \cdot (-a)^i \cdot X^{n - 2 \cdot i} \in \Fq [X].
        \end{equation*}
    \end{defn}
    The following properties of Dickson polynomials are well-known:
    \begin{itemize}
        \item $D_n (X, 0) = X^n$.

        \item For $n \geq 2$ they satisfy the recurrence relation \cite[Lemma~2.3]{Lidl-Dickson}
        \begin{equation}
            D_n (X, a) = X \cdot D_{n - 1} (X, a) - a \cdot D_{n - 2} (X, a).
        \end{equation}

        \item $D_n (X, a)$ is a permutation polynomial over $\Fq$ if and only if $\gcd \left( n, q^2 - 1 \right) = 1$ \cite[7.16.~Theorem]{Niederreiter-FiniteFields}.

        \item They satisfy a commutation relation under composition \cite[Lemma~2.6]{Lidl-Dickson}
        \begin{equation}\label{Equ: Dickson commutation}
            D_{m \cdot n} (X, a) = D_m \big( D_n (X, a), a^n \big) = D_n \big( D_m (X, a), a^m \big).
        \end{equation}

        \item They satisfy \cite[Lemma~2.3]{Lidl-Dickson}
        \begin{equation}\label{Equ: Dickson polynomial multiplication with constant}
            b^n \cdot D_n (X, a) = D_n (b \cdot X, b^2 \cdot a).
        \end{equation}

        \item For $p$ the characteristic of $\Fq$ they satisfy \cite[Lemma~2.3]{Lidl-Dickson}
        \begin{equation}
            D_{p \cdot n} (X, a) = \big( D_n (X, a) \big)^p.
        \end{equation}

        \item If $2 \mid d$, then all terms on $D_d (X, a)$ have even degree.

        \item If $2 \nmid d$, then all terms on $D_d (X, a)$ have odd degree.
    \end{itemize}

    Thus, given a Dickson polynomial of degree $p^k \cdot d$ where $p \nmid d$ and $d > 1$, then we can reduce the $c$-Boomerang equations of $D_{p^k \cdot d} (X, a)$ to $c$-Boomerang equations of $D_d (X, a)$ via an application of a linearized permutation monomial.
    Thus, we can always apply \Cref{Th: degree bound c = 1,Th: degree bound c = -1} to estimate the $\pm 1$-Boomerang uniformity of Dickson polynomials.

    In \Cref{Alg: Dickson polynomial generation} we provide the \SageMath function for generation of Dickson polynomials.
    \begin{algorithm}[H]
        \centering
        \caption{Construction of $n$\textsuperscript{th} Dickson polynomial.}
        \label{Alg: Dickson polynomial generation}
        \lstinputlisting[language=MyPython]{./Code/dickson_polynomial.sage}
    \end{algorithm}

    The $c$-BCT of Dickson polynomials satisfies the following relation.
    \begin{lem}\label[lem]{Lem: Dickson polynomials square relationship}
        Let $\Fq$ be a finite field, let $n \in \Z_{\geq 1}$, let $\alpha, \beta \in \Fqx$, and let $\gamma = \frac{\beta}{\alpha}$.
        If either
        \begin{enumerate}[label=(\roman*)]
            \item $\alpha$ and $\beta$ are squares, or

            \item $\alpha$ and $\beta$ are non-squares,
        \end{enumerate}
        then ${}_c \mathcal{B}_{D_n (x, \alpha)} (a, b) = {}_c \mathcal{B}_{D_n (x, \beta)} \big( \gamma^\frac{1}{2} \cdot a, \gamma^\frac{n}{2} \cdot b \big)$.
        In particular, $\beta_{D_n (x, \alpha), c} = \beta_{D_n (x, \beta), c}$.
    \end{lem}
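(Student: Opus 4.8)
The plan is to exploit the scaling identity $b^{n} \cdot D_{n}(X, a) = D_{n}(b \cdot X, b^{2} \cdot a)$ from \Cref{Equ: Dickson polynomial multiplication with constant}. Under either hypothesis the ratio $\gamma = \beta / \alpha$ is a square in $\Fq$ --- in case (i) it is a quotient of two squares, in case (ii) a quotient of two non-squares --- so a square root $\gamma^{1/2} \in \Fqx$ exists (in characteristic $2$ every element is a square, condition (ii) is vacuous, and $\gamma^{1/2}$ is obtained from the Frobenius). Setting $b = \gamma^{1/2}$ in the identity gives $b^{2} \cdot \alpha = \gamma \cdot \alpha = \beta$, hence the polynomial identity
\[
D_{n}\big( \gamma^{1/2} \cdot X, \beta \big) = \gamma^{n/2} \cdot D_{n}(X, \alpha) \in \Fq [X],
\]
where $\gamma^{n/2} := \big( \gamma^{1/2} \big)^{n}$.

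Next I would push this identity through the defining equations of the $c$-BCT (\Cref{Def: boomerang uniformity}). Write $F = D_{n}(\cdot, \alpha)$ and $G = D_{n}(\cdot, \beta)$, and consider the map $\Phi : \Fq^{2} \to \Fq^{2}$, $(x, y) \mapsto \big( \gamma^{1/2} \cdot x, \gamma^{1/2} \cdot y \big)$, which is a bijection since $\gamma^{1/2} \neq 0$. Applying $G\big( \gamma^{1/2} \cdot t \big) = \gamma^{n/2} \cdot F(t)$ with $t \in \{ x + y,\ x,\ x + y + a,\ x + a \}$ one computes
\[
G\big( \gamma^{1/2} (x + y) \big) - c \cdot G\big( \gamma^{1/2} x \big) = \gamma^{n/2} \cdot \big( F(x + y) - c \cdot F(x) \big),
\]
\[
c \cdot G\big( \gamma^{1/2} (x + y + a) \big) - G\big( \gamma^{1/2} (x + a) \big) = \gamma^{n/2} \cdot \big( c \cdot F(x + y + a) - F(x + a) \big).
\]
Therefore $(x, y)$ satisfies the $c$-Boomerang system for $F$ at $(a, b)$ if and only if $\Phi(x, y)$ satisfies the $c$-Boomerang system for $G$ at $\big( \gamma^{1/2} \cdot a, \gamma^{n/2} \cdot b \big)$ (the scalar $\gamma^{n/2}$ matches $b$ in the first equation and $c \cdot b$ in the second because $c \cdot \gamma^{n/2} \cdot b = \gamma^{n/2} \cdot (c \cdot b)$). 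Since $\Phi$ is a bijection between these two solution sets, they have equal cardinality, which is precisely ${}_{c} \mathcal{B}_{D_{n}(x, \alpha)}(a, b) = {}_{c} \mathcal{B}_{D_{n}(x, \beta)}\big( \gamma^{1/2} \cdot a, \gamma^{n/2} \cdot b \big)$.

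For the statement on the $c$-Boomerang uniformity, observe that $(a, b) \mapsto \big( \gamma^{1/2} \cdot a, \gamma^{n/2} \cdot b \big)$ is a bijection of $\Fqx \times \Fqx$ onto itself, so taking the maximum over $a, b \in \Fqx$ on both sides of the equality just established yields $\beta_{D_{n}(x, \alpha), c} = \beta_{D_{n}(x, \beta), c}$. I do not anticipate a genuine obstacle: the only point requiring care is the well-definedness of $\gamma^{1/2}$, which is exactly what hypotheses (i)/(ii) guarantee, together with the bookkeeping of which side of each boomerang equation the factor $\gamma^{n/2}$ multiplies when substituting $\Phi$.
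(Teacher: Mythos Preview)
Your proposal is correct and follows exactly the paper's approach: both arguments reduce to the scaling identity $\gamma^{n/2} \cdot D_n(x,\alpha) = D_n(\gamma^{1/2} \cdot x,\beta)$ (valid because $\gamma$ is a square under either hypothesis) and then read off the claimed equality of $c$-BCT entries directly from \Cref{Def: boomerang uniformity}. The paper's proof is terser and simply asserts the last step, whereas you spell out the bijection $\Phi$ and the verification of both boomerang equations explicitly, but the substance is identical.
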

    \begin{proof}
        \begin{itemize}
            \item []

            \item If $\alpha$ and $\beta$ are squares, then $\gamma$ is one too.

            \item If $\alpha$ and $\beta$ are non-squares, then $\gamma$ is a square.\footnote{A very elegant argument for the product of non-squares in $\Fq$ being square which does not require cyclicity of $\Fqx$ was given in \url{https://math.stackexchange.com/a/1938451/696633}.}
        \end{itemize}
        Therefore, in either case we can utilize \Cref{Equ: Dickson polynomial multiplication with constant} as follows
        \begin{equation*}
            \gamma^\frac{n}{2} \cdot D_n (x, \alpha) = D_n \Big( \gamma^\frac{1}{2} \cdot x, \gamma \cdot \alpha \Big) = D_n \Big( \gamma^\frac{1}{2} \cdot x, \beta \Big).
        \end{equation*}
        From \Cref{Def: boomerang uniformity} it then follows that ${}_c \mathcal{B}_{D_n (x, \alpha)} (a, b) = {}_c \mathcal{B}_{D_n (x, \beta)} \big( \gamma^\frac{1}{2} \cdot a, \gamma^\frac{n}{2} \cdot b \big)$.
    \end{proof}

    This lemma has an interesting consequence.
    If we consider $\left\{ \alpha, \beta_{D_n (x, \alpha), c} \right\}_{\alpha \in \Fq}$ as table, then we can have at most three distinct entries for $\beta_{D_n (x, \alpha), c}$.
    Namely, one entry for $\alpha = 0$, one for $\alpha$ square and one for $\alpha$ non-square.
    \begin{ex}
        Let $q = 11$, then we have the following $c$-Boomerang uniformities for $D_7 (X, \alpha)$.
        \begin{table}[H]
            \centering
            \caption{$c$-Boomerang uniformities of $D_7 (X, \alpha)$ over $\F_{11}$.}
            \begin{tabular}{ c | c | c | c | c | c | c | c | c | c | c }
                \toprule
                & \multicolumn{10}{ c }{$\beta_{D_7 (x, \alpha), c}$} \\
                \midrule
                $c$ & $1$ & $2$ & $3$ & $4$ & $5$ & $6$ & $7$ & $8$ & $9$ & $10$  \\
                \midrule
                $\alpha = 0$        & $3$ & $5$ & $2$ & $2$ & $2$ & $5$ & $2$ & $2$ & $2$ & $4$ \\
                $\alpha$ square     & $6$ & $3$ & $4$ & $4$ & $3$ & $3$ & $5$ & $5$ & $3$ & $6$ \\
                $\alpha$ non-square & $5$ & $5$ & $3$ & $3$ & $3$ & $5$ & $3$ & $3$ & $3$ & $4$ \\
                \bottomrule
            \end{tabular}
        \end{table}
        The \SageMath code to compute the $c$-Boomerang uniformities of Dickson polynomials over $\F_{11}$ can be found in \Cref{Alg: Dickson c-Boomerang uniformities}.
    \end{ex}

    \subsection{Examples for \texorpdfstring{$c = 1$}{c = 1}}
    First, we present a tight example for \Cref{Th: degree bound c = 1} in odd characteristic.
    \begin{ex}\label[ex]{Ex: tight example q = 11}
        Let $q = 11$, let $f = X^7$, and let $a = 1$ and $b = 3$.
        Then, the DRL Gr\"obner basis $z >_{DRL} x$ of the boomerang equations is
        \begin{align*}
            g_1 &=
            &&x^{11} + 4 z^{4} x^{5} + 8 x^{9} - z^{4} x^{4} + 8 z^{3} x^{5} + 3 x^{8} - z^{4} x^{3} + 4 z^{3} x^{4} + 8 z^{2} x^{5} + 7 x^{7} + 5 z^{4} x^{2} - z^{3} x^{3} + 7 z^{2} x^{4} \\
            &\phantom{=}
            &&+ 4 z x^{5} + 5 x^{6} + 5 z^{4} x + z^{3} x^{2} + 6 z^{2} x^{3} + z x^{4} - x^{5} + 8 z^{4} + 7 z^{3} x + 5 z^{2} x^{2} + 6 z x^{3} + 5 x^{4} + 8 z^{3} - z^{2} x \\
            &\phantom{=}
            &&+ 7 z x^{2} + 3 x^{3} + 7 z^{2} + 9 z x + 8 x^{2} + 6 z + 3 x + 1,
            \\
            g_2 &=
            &&z x^{6} - x^{7} + 3 z x^{5} + 8 x^{6} + 7 z^{4} x + 7 z^{3} x^{2} + 7 z^{2} x^{3} + z x^{4} + 9 x^{5} + 9 z^{4} - z^{3} x - z^{2} x^{2} + 4 z x^{3} + 6 x^{4} + 3 z^{3} \\
            &\phantom{=}
            &&+ 2 z^{2} x + 5 z x^{2} + 9 x^{3} - z^{2} + 3 z x + 4 x^{2} + 2 z + 7 x + 4,
            \\
            g_3 &=
            &&z^{5} + z^{4} x + z^{3} x^{2} + z^{2} x^{3} + z x^{4} + x^{5} + 3 z^{4} + 3 z^{3} x + 3 z^{2} x^{2} + 3 z x^{3} + 3 x^{4} + 5 z^{3} + 5 z^{2} x \\
            &\phantom{=}
            &&+ 5 z x^{2} + 5 x^{3} + 5 z^{2} + 5 z x + 5 x^{2} + 3 z + 3 x + 1,
        \end{align*}
        and the LEX Gr\"obner basis with $z >_{LEX} x$ is
        \begin{align*}
            h_1 &=
            &&z + 5 x^{34} + 4 x^{33} + 5 x^{32} + 4 x^{31} + 6 x^{30} + x^{29} + 8 x^{28} + x^{26} + 9 x^{25} + 4 x^{23} - x^{22} + 7 x^{20} - x^{19} + 4 x^{18} \\
            &\phantom{=}
            &&+ 6 x^{17} - x^{16} + x^{15} + 9 x^{14} + 4 x^{13} + 9 x^{12} + 2 x^{11} + 3 x^{10} + 2 x^{9} + 8 x^{8} + 6 x^{7} + 6 x^{6} + 2 x^{5} + 5 x^{3} + 4 x^{2} \\
            &\phantom{=}
            &&+ 6 x + 6,
            \\
            h_2 &=
            &&(x + 7) \cdot (x + 10) \cdot (x + 4) \cdot (x^{2} + 6 x + 1) \cdot (x^{2} + 8 x + 3) \cdot (x^{4} - x^{3} + 8 x^{2} + 4 x + 3) \\
            &\phantom{=}
            &&\cdot (x^{4} - x^{3} + 9 x^{2} + 7) \cdot (x^{20} + x^{19} + x^{18} - x^{17} + 4 x^{16} + 3 x^{15} + 6 x^{14} - x^{13} + x^{12} + 5 x^{11} + 9 x^{10} + x^{9}
            \\
            &\phantom{=}
            &&\phantom{\cdot (}
            + 7 x^{8} + 3 x^{6} + 7 x^{5} + 8 x^{4} + 6 x^{3} + 7 x^{2} + 9 x + 3).
        \end{align*}
        Therefore, over the finite field of size $q^{20}$ the univariate LEX polynomial will decompose into linear factors and \Cref{Th: degree bound c = 1} becomes tight.
        Also, note that $\gcd \left( 7, q^{20} - 1 \right) = 1$.
        The code for this example is provided in \Cref{Alg: tight example q = 11}.
    \end{ex}

    Next we present a tight example for \Cref{Cor: binary degree bound c = 1} in characteristic $2$.
    \begin{ex}\label[ex]{Ex: tight example q = 16}
        Let $\F_{2^4} (g) \cong \F_2 [Y] / \left( Y^4 + Y + 1 \right)$, let $f = D_{11} (X, 1)$, and let $a = g^2 + 1$ and $b = g^3 + g^2 + g$.
        Then, the LEX Gr\"obner basis $z >_{LEX} x$ of the boomerang equations is of the form
        \begin{align*}
            h_1 &= z + \tilde{h}_{1, 1} (x), \\
            h_2 &= \prod_{i = 1}^{9} \tilde{h}_{2, i} (x),
        \end{align*}
        where $\degree{\tilde{h}_{1, 1}} = 96$, $\degree{\tilde{h}_{2, 1}} = \ldots = \degree{\tilde{h}_{2, 3}} = 2$, $\degree{\tilde{h}_{2, 4}} = \degree{\tilde{h}_{2, 5}} = 4$, $\degree{\tilde{h}_{2, 6}} = \degree{\tilde{h}_{2, 7}} = 14$ and $\degree{\tilde{h}_{2, 8}} = \degree{\tilde{h}_{2, 9}} = 28$.
        So, $\dim_{\Fq} \left( h_1, h_2 \right) = 98 = 11 \cdot (11 - 2) - 1$, and over the finite field of size $q^{14} = 2^{56}$ the univariate LEX polynomial decomposes into linear factors and \Cref{Cor: binary degree bound c = 1} becomes tight.
        Also, note that $\gcd \left( 11, 2^{2 \cdot 56} - 1 \right) = 1$.
        The code for this example is provided in \Cref{Alg: tight example q = 16}.
    \end{ex}

    \subsection{Examples for \texorpdfstring{$c = -1$}{c = -1}}
    Now we present a tight example for \Cref{Th: degree bound c = -1} in odd characteristic.
    \begin{ex}\label[ex]{Ex: tight example q = 257}
        Let $q = 257$, let $f = X^5$, and let $a = 1$ and $b = 48$.
        Then, the DRL Gr\"obner basis $z >_{DRL} x$ of the $-1$-Boomerang equations is
        \begin{align*}
            g_1 &=
            x^{8} + 4 x^{7} + 2 z^{3} x^{3} + 8 x^{6} + 3 z^{3} x^{2} + 3 z^{2} x^{3} + 10 x^{5} + 156 z^{3} x + 158 z^{2} x^{2} + 156 z x^{3} + 214 x^{4} + 182 z^{3}
            \\
            &\phantom{=}
            + 235 z^{2} x + 235 z x^{2} + 84 x^{3} + 158 z^{2} + 210 z x + 57 x^{2} + 55 z + 4 x + 120, \\
            g_2 &=
            z x^{4} - x^{5} + 2 z x^{3} + 255 x^{4} + 255 z^{3} + 2 z x^{2} + 253 x^{3} + 254 z^{2} + z x + 253 x^{2} + 204 z + 255 x + 150, \\
            g_3 &=
            z^{4} + x^{4} + 2 z^{3} + 2 x^{3} + 2 z^{2} + 2 x^{2} + z + x + 206,
        \end{align*}
        and the LEX Gr\"obner basis with $z >_{LEX} x$ is
        \begin{align*}
            h_1 &=
            z + 198 x^{19} + 33 x^{18} + 165 x^{17} + 77 x^{16} + 45 x^{15} + 84 x^{14} + 122 x^{13} + 190 x^{12} + 166 x^{11} + 12 x^{10} + 79 x^{9}
            \\
            &\phantom{=}
            + 2 x^{8} + 120 x^{7} + 100 x^{6} + 239 x^{5} + 61 x^{4} + 96 x^{3} + 59 x^{2} + 141 x + 161,
            \\
            h_2 &=
            (x^{2} + 9 x + 3) \cdot (x^{2} + 25 x + 255) \cdot (x^{2} + 33 x + 132) \cdot (x^{2} + 50 x + 24) \cdot (x^{2} + 55 x + 197)
            \\
            &\phantom{=(}
            \cdot (x^{2} + 95 x + 164) \cdot (x^{2} + 134 x + 125) \cdot (x^{2} + 192 x + 35) \cdot (x^{2} + 219 x + 41) \cdot (x^{2} + 226 x + 86).
        \end{align*}
        Therefore, over the finite field of size $q^2$ the univariate LEX polynomial will decompose into linear factors and \Cref{Th: degree bound c = -1} becomes tight.
        Also, note that $\gcd \left( 5, q - 1 \right) = \gcd \left( 5, q^2 - 1 \right) = 1$.
        The code for this example is provided in \Cref{Alg: tight example q = 257}.
    \end{ex}

    We also provide a Dickson polynomial example.
    \begin{ex}\label[ex]{Ex: tight example q = 89}
        Let $p = 89$, let $f = D_7 (X, 1)$, and let $a = 1$ and $b = 17$.
        Then, the DRL Gr\"obner basis $z >_{DRL} x$ of the $-1$-Boomerang equations is
        \begin{align*}
            g_1 &=
            x^{12} + 6 x^{11} + 2 z^{5} x^{5} + 9 x^{10} + 5 z^{5} x^{4} + 5 z^{4} x^{5} + 79 x^{9} + 15 z^{4} x^{4} + 57 x^{8} + 84 z^{5} x^{2} + 5 z^{4} x^{3} + 5 z^{3} x^{4}
            \\
            &\phantom{=(}
            + 84 z^{2} x^{5} + 87 x^{7} + 37 z^{5} x + 24 z^{4} x^{2} + 49 z^{3} x^{3} + 24 z^{2} x^{4} + 37 z x^{5} + 50 x^{6} + 11 z^{5} + 29 z^{4} x + 34 z^{3} x^{2}
            \\
            &\phantom{=(}
            + 34 z^{2} x^{3} + 29 z x^{4} + 60 x^{5} + 20 z^{4} + 10 z^{3} x + 35 z^{2} x^{2} + 10 z x^{3} + x^{4} + 12 z^{3} + 5 z^{2} x + 5 z x^{2} + 32 x^{3}
            \\
            &\phantom{=(}
            + 34 z^{2} + 7 z x + 25 x^{2} + 66 z + 3 x + 24,
            \\
            g_2 &=
            z x^{6} - x^{7} + 3 z x^{5} + 86 x^{6} + 87 z^{5} + 87 x^{5} + 84 z^{4} + 84 z x^{3} - z x^{2} + x^{3} + 5 z^{2} + 2 z x + 3 x^{2} + 14 z
            \\
            &\phantom{=(}
            + x + 67,
            \\
            g_3 &=
            z^{6} + x^{6} + 3 z^{5} + 3 x^{5} + 84 z^{3} + 84 x^{3} - z^{2} - x^{2} + 2 z + 2 x + 13,
        \end{align*}
        and the LEX Gr\"obner basis with $z >_{LEX} x$ is
        \begin{align*}
            h_1 &=
            z + 71 x^{41} + 86 x^{40} + 41 x^{39} + 73 x^{38} + 82 x^{37} + 20 x^{36} + 37 x^{35} + 30 x^{34} + 65 x^{33} + 56 x^{32} + 51 x^{31} + 63 x^{30}
            \\
            &\phantom{=(}
            + 11 x^{29} - x^{28} + 32 x^{27} + 25 x^{26} + 56 x^{25} + 27 x^{24} + 24 x^{23} + 22 x^{22} + 58 x^{21} + 25 x^{20} + 42 x^{19} + 85 x^{18}
            \\
            &\phantom{=(}
            + 45 x^{17} + 59 x^{16} + 55 x^{15} + 84 x^{14} + 37 x^{12} + 79 x^{11} + 85 x^{10} + 71 x^{9} + 47 x^{8} + 16 x^{7} + 49 x^{6} + 75 x^{5} \\
            &\phantom{=(}
            + 66 x^{4} + 86 x^{3} + 55 x^{2} + 77 x + 28,
            \\
            h_2 &=
            (x + 28) \cdot (x + 34) \cdot (x + 73) \cdot (x + 80) \cdot (x + 85) \cdot x \cdot (x + 9) \cdot (x + 60) \cdot (x^{2} + 32 x + 55)
            \\
            &\phantom{=(}
            \cdot (x^{2} + 72 x + 13) \cdot (x^{10} + 26 x^{9} + 79 x^{8} + 63 x^{7} + 45 x^{6} + 64 x^{5} + 15 x^{4} + 16 x^{3} + 44 x^{2} + 11 x + 20)
            \\
            &\phantom{=(}
            \cdot (x^{10} + 62 x^{9} + 31 x^{8} + 3 x^{7} + 16 x^{6} + 69 x^{5} + 84 x^{4} + 73 x^{3} + 16 x^{2} + 50 x + 36)
            \\
            &\phantom{=(}
            \cdot (x^{10} + 83 x^{9} + 16 x^{8} + 30 x^{7} + 4 x^{6} + 27 x^{5} + 6 x^{4} + 67 x^{3} + 69 x^{2} + 65 x + 88).
        \end{align*}
        Therefore, over the finite field of size $q^10$ the univariate LEX polynomial will decompose into linear factors and \Cref{Cor: degree bound odd function c = -1} becomes tight.
        Also, note that $\gcd \left( 7, q^2 - 1 \right) = \gcd \left( 7, q^{20} - 1 \right) = 1$.
        The code for this example is provided in \Cref{Alg: tight example q = 89}.
    \end{ex}

    \subsection{Example for \texorpdfstring{$c^2 \neq 1$}{c\textasciicircum 2 != 1}}
    Finally, we present an example for \Cref{Prop: c-boomerang unfiformity}.
    \begin{ex}\label[ex]{Ex: tight example q = 191}
        Let $q = 191$, let $f = X^3$, and let $a = 1$ and $b = 125$.
        Then, the LEX Gr\"obner basis $z >_{LEX} x$ of the $11$-Boomerang equations is
        \begin{align*}
            h_1 &= z + 126 x^{8} + 110 x^{7} + 95 x^{6} + 45 x^{5} + 65 x^{4} + 107 x^{3} + 15 x^{2} + 181 x + 166,
            \\
            h_2 &= (x + 102) \cdot (x + 140) \cdot (x + 145) \cdot (x + 167) \cdot (x + 173) \cdot (x + 179) \cdot (x + 184) \cdot (x + 45) \cdot (x + 73).
        \end{align*}
        Therefore, \Cref{Prop: c-boomerang unfiformity} is tight for $c = 11$.
        Also, note that $\gcd \left( 3, q - 1 \right) = 1$.
        The code for this example is provided in \Cref{Alg: tight example q = 191}.
    \end{ex}

    \section{Discussion}\label{Sec: Discussion}
    In this paper we have generalized the $c$-Boomerang uniformity bounds from \cite{Steiner-Boomerang} for permutation monomials (\Cref{Equ: c-BCT monomials}) to arbitrary permutation polynomials.
    In \cite[\S 5]{Steiner-Boomerang} the monomial bounds were utilized to estimate $c$-BCTs of permutations over $\Fqn$ based on the \emph{Generalized Triangular Dynamical System} (GTDS) \cite{SAC:RoySte24}.
    Likewise, our bounds can be utilized to generalize the estimations from \cite[Theorem 5.5]{Steiner-Boomerang}.

    In \cite[\S 4]{Steiner-Boomerang} tightness of the $\pm 1$-Boomerang uniformity bounds for monomials was investigated with symbolic Gr\"obner basis computations, and two conjectures for $\big( \LM_{DRL} (f) \; \big\vert \; f \in (F_1, F_2) \big) \subset \Fq [x, z]$, where $F_1$ and $F_2$ are the $\pm 1$-Boomerang polynomials, were proposed \cite[Conjecture~4.3, 4.4]{Steiner-Boomerang}.
    Our investigations into the arithmetic of $c$-Boomerang polynomial systems (\Cref{Sec: Arithmetic of Boomerang Polynomial Systems}) answer these conjectures in the affirmative if we have for the degree $d$ that $\gcd \left( d \cdot (d - 1), q \right) = 1$.
    In particular, if $q$ is a prime number and $d < q$, the conjectures are true.

    Readers might have noticed that most of our examples in \Cref{Sec: Constructing Tight Examples} become tight over relatively large field extension (compared to the base field).
    Therefore, one might wonder whether it is possible to quantify how small the fraction $\frac{d}{q}$ has to be  ``on average'' so that our bounds from \Cref{Th: degree bound c = 1}, \Cref{Prop: c-boomerang unfiformity} and \Cref{Th: degree bound c = -1} become tight.

    \appendix
    \section{\texorpdfstring{\SageMath}{SageMath} Code}
    \begin{algorithm}[H]
        \centering
        \caption{$c$-Boomerang uniformities of Dickson polynomials over $\F_{11}$.}
        \label{Alg: Dickson c-Boomerang uniformities}
        \lstinputlisting[language=MyPython]{./Code/c_boomerang_uniformities_dickson_polynomials.sage}
    \end{algorithm}

    \begin{algorithm}[H]
    \centering
    \caption{Code for \Cref{Ex: tight example q = 11}.}
    \label{Alg: tight example q = 11}
    \lstinputlisting[language=MyPython]{./Code/example_q_11.sage}
    \end{algorithm}

    \begin{algorithm}[H]
    \centering
    \caption{Code for \Cref{Ex: tight example q = 16}.}
    \label{Alg: tight example q = 16}
    \lstinputlisting[language=MyPython]{./Code/example_q_16.sage}
    \end{algorithm}

    \begin{algorithm}[H]
        \centering
        \caption{Code for \Cref{Ex: tight example q = 257}.}
        \label{Alg: tight example q = 257}
        \lstinputlisting[language=MyPython]{./Code/example_q_257.sage}
    \end{algorithm}

    \begin{algorithm}[H]
        \centering
        \caption{Code for \Cref{Ex: tight example q = 89}.}
        \label{Alg: tight example q = 89}
        \lstinputlisting[language=MyPython]{./Code/example_q_89.sage}
    \end{algorithm}

    \begin{algorithm}[H]
        \centering
        \caption{Code for \Cref{Ex: tight example q = 191}.}
        \label{Alg: tight example q = 191}
        \lstinputlisting[language=MyPython]{./Code/example_q_191.sage}
    \end{algorithm}

    \printbibliography[heading=bibintoc]

\end{document}